\def\bC {\mathbf{C}}
\def\bR {\mathbf{R}}
\def\bZ {\mathbf{Z}}
\def\cH {\mathcal{H}}
\def\a {{\alpha}}
\newcommand{\Tr}{\operatorname{trace}}
\newcommand{\ba}{\begin{aligned}}
\newcommand{\ea}{\end{aligned}}
\newcommand{\be}{\begin{equation}}
\newcommand{\ee}{\end{equation}}
\newtheorem{Thm}{Theorem}[section]
\newtheorem{Prop}[Thm]{Proposition}
\newtheorem{Cor}[Thm]{Corollary}
\newtheorem{Lem}[Thm]{Lemma}
\newtheorem{Def}[Thm]{Definition}
\def\bea{\begin{eqnarray}}
\def\eea{\end{eqnarray}}
\begin{document}

\title{
Husimi, Wigner, T\"oplitz,  
quantum
statistics and anticanonical transformations}

\author[T. Paul]{Thierry Paul}
\address{CNRS, 
LYSM – Laboratoire Ypatia de Sciences Mathématiques, Roma, Italia \& LJLL – Laboratoire Jacques-Louis Lions,  
Sorbonne Université 4 place Jussieu 75005 Paris France}
\email{thierry.paul@sorbonne-universite.fr}
\LARGE
\begin{abstract}
We study the behaviour of Husimi, Wigner and T\"oplitz symbols of quantum density matrices when quantum statistics are tested on them, that is when on exchange two coordinates in one of the two variables of their integral kernel. We show that to each of these actions is  associated a canonical transform on the cotangent bundle of the underlying classical phase space. Equivalently can one  associate  a complex canonical transform on the complexification of the phase-space. In the off-diagonal T\"oplitz representation introduced in \cite{tp3}, the action considered is associated to a complex aanticanonical relation.
\end{abstract}
\maketitle

\tableofcontents

\section{Introduction}
Quantum statistics is a fundamental hypothesis in quantum mechanics. In insures in particular the stability of matter. At the contrary of many other aspects of non-relativistic quantum mechanics which have a natural ```classical" counterpart, it seems difficult to associate to statistics properties of quantum object a classical corresponding symmetry. Changing the sign after permutation of coordinates of different particle doesn't appeal any classical simple action. Moreover most of the quantities which ``passes" at the limit of vanishing Planck constant are quadratic and therefore looks at insensible to the change of sign. Finally, typical fermionic expressions such as exchange term in the Hartree-Fock theory vanishes numerically at the limit $\hbar\to 0$.

In this little note, we will implement this ``exchange" action on three (in fact four) different symbols associated to  quantum density matrices: the Husimi function (average of the density matrix on coherent states, therefore a probability density), Wigner functions (tat is the Weyl suitably renormalized by a power of the Planck constant in order to be of integral $1$ (but non positive) and the  T\"oplitz symbol appearing in the so-called positive quantization procedure.

In these three symbolic situation, the result is that associated to the exchange action appears as the action of a complex or equivalently on a doubled space canonical transformation:

\begin{enumerate}
\item for the Husimi symbol (after a  weighting by a Gaussian weight), a direct action on the variables corresponding to a complex canonical transformation: the transform $\bar{z_i}\leftrightarrow \bar{z_j}$ $z_i, z_j$ remaining unchanged. the complex canonical transform is of the form $\begin{pmatrix}
0&i\\i&o
\end{pmatrix}$.
\item idem for the T\"oplitz symbol, with a different  Gaussian weight
\item for thw Wigner symbol (renormalized Weyl symbol), the above-mentioned complex transform is seen as a canonical transformtion on the cotangent bundle of the phase space. This transformation is the composition of permutation of variables and a ``Fourier rotation" $q_i\to p_i,\ p_i\to -q_i$ and the exchange acts on the Wigner function  by the metaplectic (in a doubled dimension space) representation, namely exchange of coordinates plus Fourier transform. In particular it doesn't act by a metapletic type representation of the complex linear symplectic group.
\vskip 0.3cm
\noindent To get such a feature, one has to go the off-diagonal T\"oplitz caluclus introduced in \cite{tp3} and is this time associated to a an anticanonical transformation, that is a transformation which maps the sympletic form to its opposite.
\item the off-diagonal symbol is mapped by the action of the metaplectic representation of the anticanonical linear transformation $\begin{pmatrix}
0&i\\-i&0
\end{pmatrix}$. See Sections \ref{offtop} and mostly \ref{linkmeta} for details.
\end{enumerate}

The conclusion to which  all this (sometimes only formal) computations lead is the fact that, at a ``classical" level, quantum statistics involve transformation which don't preserve the usual symplectic cotangent bundle of the configuration space: either one has to  pass in a non trivial way to the cotangent bundle of the cotangent bundle itself, either one has to non preserve the sympletic structure, and allow anticanonical transformations. 
\vskip 1cm
\textbf{The underlying classical picture of bosons and fermions  either lives on the cotangent space of the classical phase space, {\begin{color}{red} i.e. in the setting of second quantization,\end{color}}  or involves antisymplectic symmetries.}
\section{Quantum statistics}
On the setting of indistinguishable quantum particles, a state is a density matrix, i.e. a positive trace one operator on $\cH^{\otimes N}$, invariant by permutations of the factors in the tensorial product. we have denoted $\cH=L^2(\bR^d)$.
\begin{Def}
 Let $\rho$ be a density matrix given by an integral kernel $\rho(X;Y),\ X=(x_1,\dots,x_n),\ Y=(y_1,\dots,y_n)$. We define, for $i,j=1,\dots,N$, the mappings 
 
$U_{i\leftrightarrow j}:\ \rho(X;Y)\to U_{i\to j}\rho(X;Y)=\rho(X;Y)|_{y_i\leftrightarrow y_j}$ 

and 

$V_{i\leftrightarrow j}:\ \rho(X;Y)\to V_{i\to j}\rho(X;Y)=\rho(X;Y)|_{x_i\leftrightarrow x_j}$.
\end{Def}
In terms of density matrices, quantum statistics will be seen as looking at density matrices which are eigenvectors of eigenvalue $1$ or $-1$ of the two mappings  $U_{i\leftrightarrow j},V_{i\leftrightarrow j}$.

The indistinguishability property of the quantum system reads as
\be\label{indis}
U_{i\leftrightarrow j}V_{i\leftrightarrow j}
=
V_{i\leftrightarrow j}U_{i\leftrightarrow j},\ \ \ \forall i,j=1,\dots,N.
\ee

\section{Husimi}\label{hus}
Let us recall that the Husimi function of a density matrix $\rho$ is defined as
\be\label{dejhus}
\widetilde W[{\rho}](Z,\bar Z)
=
\frac1{(2\pi\hbar)^{dN}}
\langle\varphi_Z|\rho|\varphi_Z\rangle,
\ee
where, for $Z=q+ip\in\bZ^{dN}$ and $x\in\bR^{dN}$, 
\be\label{defec}
\varphi_Z(x)
=
\frac1{(\pi\hbar)^\frac{dN}4}
e^{-\frac{(x-q)^2}{2\hbar}}e^{i\frac{p.x}\hbar}.
\ee
The most elementary properties of the Husimi transform are
\be\label{elemprophus}
\widetilde W[{\rho}]\geq 0
\mbox{ and }\int_{\bZ^{dN}}\widetilde W[{\rho}](Z)dZ=\Tr\rho=1.
\ee
Our first link between quantum statistics and the classical underlying space is the contents of the following result.
\begin{Lem}
%

Let us consider the Husimi function of $\rho$, $\widetilde W[\rho](Z,\bar Z)$ expressed on the complex variables $Z=(z_1,\dots,z_n)$, $z_l=~q_l+ip_l,\ \bar z_l=q_l-ip_l$.

Then
\[
\widetilde W[{U_{i\leftrightarrow j}\rho}](Z,\bar Z)
=
e^{-\frac{(\bar z_i-\bar z_j)(z_i-z_j)}{2\hbar}}\widetilde W[\rho](Z,\bar Z)|_{z_i\leftrightarrow z_j}
\]
\[
\widetilde W[{V_{i\leftrightarrow j}\rho}](Z,\bar Z)
=
e^{-\frac{|z_i-z_j|^2}{2\hbar}}
\widetilde W[\rho](Z,\bar Z)|_{\bar z_i\leftrightarrow \bar z_j}
\]
\end{Lem}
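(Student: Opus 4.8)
The backbone of the argument is a single identity expressing an off-diagonal coherent-state matrix element of $\rho$ through the analytically continued Husimi symbol:
\[
\langle\varphi_Z|\rho|\varphi_W\rangle \;=\; (2\pi\hbar)^{dN}\,\langle\varphi_Z|\varphi_W\rangle\;\widetilde W[\rho]^{\sharp}(W,\bar Z),\qquad Z,W\in\bZ^{dN},
\]
where $\widetilde W[\rho]^{\sharp}(a,b)$ denotes the unique function, holomorphic in each of its two arguments separately, that restricts to the Husimi symbol on the physical locus, $\widetilde W[\rho]^{\sharp}(Z,\bar Z)=\widetilde W[\rho](Z,\bar Z)$. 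The plan is to obtain this identity first and then to feed into it the elementary changes of variables produced by the exchanges $U_{i\leftrightarrow j}$ and $V_{i\leftrightarrow j}$.

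For the identity I would write $\varphi_Z=e^{-q^2/(2\hbar)}\eta_Z$ with $\eta_Z(X)=(\pi\hbar)^{-dN/4}e^{-X^2/(2\hbar)}e^{Z\cdot X/\hbar}$, so that $\overline{\eta_Z}$ involves $\bar Z$ only through $e^{\bar Z\cdot X/\hbar}$; then $\langle\varphi_Z|\rho|\varphi_W\rangle=e^{-q^2/(2\hbar)-q'^2/(2\hbar)}\Psi_\rho(\bar Z,W)$ with $\Psi_\rho(\bar Z,W):=\langle\eta_Z|\rho|\eta_W\rangle$. Since $\rho$ is bounded and $W\mapsto\eta_W$ is an entire $L^2$-valued map with locally bounded norm, $\Psi_\rho$ is holomorphic in $\bar Z$ and in $W$; comparing with the case $W=Z$ identifies $\widetilde W[\rho]^{\sharp}(a,b)=(2\pi\hbar)^{-dN}e^{-(a+b)^2/(4\hbar)}\Psi_\rho(b,a)$ (uniqueness of holomorphic extension off a totally real subset). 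A one-line Gaussian computation gives $\langle\varphi_Z|\varphi_W\rangle=e^{(\bar Z+W)^2/(4\hbar)-q^2/(2\hbar)-q'^2/(2\hbar)}$, which turns the prefactor $e^{-q^2/(2\hbar)-q'^2/(2\hbar)}e^{(W+\bar Z)^2/(4\hbar)}$ into exactly $\langle\varphi_Z|\varphi_W\rangle$ and proves the identity. (Alternatively one may verify it on rank-one $\rho$, where both sides are explicit products of Gaussians, and extend by linearity using $\sum_k\lambda_k<\infty$.)

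Since $\varphi_Z(Y)=\prod_\ell\varphi_{z_\ell}(y_\ell)$ factorises over the particle index, exchanging $y_i\leftrightarrow y_j$ in the argument equals exchanging $z_i\leftrightarrow z_j$ in the label: $\varphi_Z(Y|_{y_i\leftrightarrow y_j})=\varphi_{\sigma Z}(Y)$, where $\sigma$ is the transposition $(i\,j)$ acting on the $N$ tensor factors. The Jacobian-one substitution $Y\mapsto Y|_{y_i\leftrightarrow y_j}$ (resp. $X\mapsto X|_{x_i\leftrightarrow x_j}$) in the defining integral of $\widetilde W$ then gives $\widetilde W[U_{i\leftrightarrow j}\rho](Z,\bar Z)=(2\pi\hbar)^{-dN}\langle\varphi_Z|\rho|\varphi_{\sigma Z}\rangle$ and $\widetilde W[V_{i\leftrightarrow j}\rho](Z,\bar Z)=(2\pi\hbar)^{-dN}\langle\varphi_{\sigma Z}|\rho|\varphi_Z\rangle$. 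Plugging these into the identity, and using $\langle\varphi_Z|\varphi_{\sigma Z}\rangle=\langle\varphi_{z_i}|\varphi_{z_j}\rangle\langle\varphi_{z_j}|\varphi_{z_i}\rangle=|\langle\varphi_{z_i}|\varphi_{z_j}\rangle|^2=e^{-|z_i-z_j|^2/(2\hbar)}=e^{-(\bar z_i-\bar z_j)(z_i-z_j)/(2\hbar)}$, and likewise $\langle\varphi_{\sigma Z}|\varphi_Z\rangle=e^{-|z_i-z_j|^2/(2\hbar)}$, produces the two Gaussian weights of the statement. Finally, because in $\widetilde W[\rho]^{\sharp}$ the $z$- and $\bar z$-variables are decoupled, $\widetilde W[\rho]^{\sharp}(\sigma Z,\bar Z)$ is by definition $\widetilde W[\rho](Z,\bar Z)$ with $z_i\leftrightarrow z_j$ performed and $\bar Z$ left untouched, i.e. $\widetilde W[\rho](Z,\bar Z)|_{z_i\leftrightarrow z_j}$; and $\widetilde W[\rho]^{\sharp}(Z,\overline{\sigma Z})=\widetilde W[\rho]^{\sharp}(Z,\sigma\bar Z)=\widetilde W[\rho](Z,\bar Z)|_{\bar z_i\leftrightarrow\bar z_j}$. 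This yields both formulae.

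The step I expect to be the real obstacle is making the first paragraph watertight: establishing that the Husimi symbol genuinely extends to a function in which $z$ and $\bar z$ act as free, independent variables (the FBI/Bargmann analyticity, here automatic from the local boundedness of $W\mapsto\|\eta_W\|_{L^2}$), and — more to the point — pinning down that the Gaussian cocycle relating $\langle\varphi_Z|\rho|\varphi_W\rangle$ to that extension is exactly the coherent-state overlap $\langle\varphi_Z|\varphi_W\rangle$, since it is this overlap, evaluated at $W=\sigma Z$, that manufactures the weights $e^{-|z_i-z_j|^2/(2\hbar)}$ of the Lemma. Everything downstream is routine bookkeeping with the transposition $\sigma$ and with the product structure of coherent states over the particle index.
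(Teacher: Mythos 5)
Your proof is correct. The paper states this lemma without giving any proof, so there is nothing to compare against; your route --- factor the coherent state as $\varphi_Z=e^{-q^2/2\hbar}\eta_Z$ with $\eta_Z$ entire in $Z$, identify $\langle\varphi_Z|\rho|\varphi_W\rangle=(2\pi\hbar)^{dN}\langle\varphi_Z|\varphi_W\rangle\,\widetilde W[\rho]^{\sharp}(W,\bar Z)$ via the holomorphic (Bargmann) extension, then set $W=\sigma Z$ using the tensor-product structure of $\varphi_Z$ --- is exactly the natural one, and it correctly produces both the Gaussian weight $|\langle\varphi_{z_i}|\varphi_{z_j}\rangle|^2=e^{-|z_i-z_j|^2/2\hbar}=e^{-(\bar z_i-\bar z_j)(z_i-z_j)/2\hbar}$ (the two weights in the statement coincide on the reality locus, as you note implicitly) and the correct interpretation of $|_{z_i\leftrightarrow z_j}$ as acting only on the holomorphic slot of the extension.
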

\vskip 1cm
Note that, as expected,
\[
\widetilde W[{V_{i\leftrightarrow j}U_{i\leftrightarrow j}\rho}](Z,\bar Z)
=
\widetilde W[\rho](z,\bar z)|_{z_i\leftrightarrow z_j,\ \bar z_i\leftrightarrow \bar z_j}
\]
\vskip 1cm
Note also that, with te definition
\be\label{defzpm}
z_\pm=q_\pm+ip_\pm:=\frac{z_i\pm z_j}{\sqrt 2},
\ee
\be\label{eqhus}\nonumber
\begin{pmatrix}
z_i\\z_j\\\bar z_i\\\bar z_j
\end{pmatrix}
\to
\begin{pmatrix}
z_j\\z_i\\\bar z_i\\\bar z_j
\end{pmatrix}   
\Longleftrightarrow\   
\begin{pmatrix}
z_+\\z_-\\\bar z_+\\\bar z_-
\end{pmatrix}
\to
\begin{pmatrix}
z_+\\-z_-\\\bar z_+\\\bar z_-
\end{pmatrix}
\Longleftrightarrow  
\begin{pmatrix}
q_+\\q_-\\p_+\\p_-
\end{pmatrix}
\to
\begin{pmatrix}
q_+\\-ip_-\\p_+\\ iq_-
\end{pmatrix}
\ee
so the complex metaplectic transform associated to the exchange term is the matrix $I_+\otimes S^c_H$ with
\be\label{cSHus}
S^c_H=\begin{pmatrix}
0&-i\\i&0
\end{pmatrix},\ \det{S^c_H}=
-
1.
\ee
\begin{Cor}
A density matrix $\rho$ is bosonic if and only if,  for all $i,j=1,\dots,n$,
\begin{eqnarray}
\widetilde W[\rho](Z,\bar Z)|
&=&
e^{-\frac{(\bar z_i-\bar z_j)(z_i-z_j)}{2\hbar}}
\widetilde W[\rho](Z,\bar Z)|_{ z_i\leftrightarrow  z_j}\nonumber\\
&=&
e^{-\frac{(\bar z_i-\bar z_j)(z_i-z_j)}{2\hbar}}
\widetilde W[\rho](Z,\bar Z)|_{\bar z_i\leftrightarrow \bar z_j}.\nonumber
\end{eqnarray}
\end{Cor}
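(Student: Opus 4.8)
The plan is to read off both equivalences directly from the Lemma of Section~\ref{hus}, the only non-formal ingredient being the injectivity of the Husimi transform on trace-class operators. First I would unwind the definition from Section~2: a density matrix $\rho$ is bosonic precisely when it is a fixed point of each $U_{i\leftrightarrow j}$ and of each $V_{i\leftrightarrow j}$, i.e. $U_{i\leftrightarrow j}\rho=\rho$ and $V_{i\leftrightarrow j}\rho=\rho$ for all $i,j$ (for a self-adjoint $\rho$ these two conditions are in fact equivalent, but I will not need this). I would also record the elementary identity $(\bar z_i-\bar z_j)(z_i-z_j)=|z_i-z_j|^2$, which makes the two Gaussian prefactors appearing in the Lemma coincide; this is what allows both displayed lines of the Corollary to carry the same weight $e^{-|z_i-z_j|^2/2\hbar}$.

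For the ``only if'' direction I would simply substitute $U_{i\leftrightarrow j}\rho=\rho$ into the first formula of the Lemma and $V_{i\leftrightarrow j}\rho=\rho$ into the second, which yields the two claimed identities for $\widetilde W[\rho]$. For the ``if'' direction I would run this backwards: assuming the two identities hold, the first says $\widetilde W[U_{i\leftrightarrow j}\rho]=\widetilde W[\rho]$ and the second $\widetilde W[V_{i\leftrightarrow j}\rho]=\widetilde W[\rho]$; since $\widetilde W$ is injective on trace-class operators, this forces $U_{i\leftrightarrow j}\rho=\rho$ and $V_{i\leftrightarrow j}\rho=\rho$, i.e. $\rho$ is bosonic. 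Note that $U_{i\leftrightarrow j}\rho$ and $V_{i\leftrightarrow j}\rho$ are again trace-class (they are $\rho$ composed on one side with the permutation unitary $P_{ij}$), so the injectivity statement applies to them.

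The one point that deserves care — and the only real obstacle — is the injectivity of the Husimi transform. Here I would argue that $e^{|Z|^2/2\hbar}\langle\varphi_Z|\rho|\varphi_Z\rangle$ is the restriction to the anti-diagonal of a function that is holomorphic in one complex variable and antiholomorphic in another (in the Bargmann/anti-Wick picture, $\varphi_Z$ depends on $\bar Z$ holomorphically up to the scalar Gaussian factor $e^{-|Z|^2/4\hbar}$), so that by polarization and analytic continuation the diagonal values determine all matrix elements $\langle\varphi_Z|\rho|\varphi_W\rangle$; since the family $\{\varphi_Z\}_{Z\in\bZ^{dN}}$ is total in $L^2(\bR^{dN})$, this determines $\rho$. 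Equivalently one may simply invoke the known injectivity of the anti-Wick (Husimi) symbol map. With this in hand, every remaining step is a one-line substitution into the Lemma.
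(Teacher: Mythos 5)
Your proof is correct and follows the route the paper intends: the Corollary is a direct substitution into the Lemma of Section~3 (using $(\bar z_i-\bar z_j)(z_i-z_j)=|z_i-z_j|^2$ to reconcile the two Gaussian weights), with the ``if'' direction requiring the injectivity of the Husimi transform on trace-class operators, which the paper leaves implicit and you correctly supply via the standard Bargmann-space analyticity/totality argument.
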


\begin{Cor}
Let $n=2$. A density matrix $\rho$ is bosonic if and only if
\begin{eqnarray}
\widetilde W[\rho](Z,\bar Z)|
&=&
e^{\frac{(\bar z_1-\bar z_2)(z_1-z_2)}{4\hbar}}
H(z_1-z_2,\bar z_1-\bar z_2,z_1+z_2,\bar z_1+\bar z_2)\nonumber
\end{eqnarray}
with $H$ even (separately) in the two first variables.
\end{Cor}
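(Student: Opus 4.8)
The plan is to obtain the statement as a direct consequence of the preceding Corollary. Specialised to $n=2$ and $(i,j)=(1,2)$, that Corollary characterises bosonic $\rho$ by the simultaneous validity of
\[
\widetilde W[\rho](Z,\bar Z)=e^{-\frac{(\bar z_1-\bar z_2)(z_1-z_2)}{2\hbar}}\,\widetilde W[\rho](Z,\bar Z)\big|_{z_1\leftrightarrow z_2}
\]
and the same identity with $z_1\leftrightarrow z_2$ replaced by $\bar z_1\leftrightarrow\bar z_2$. First I would pass to the variables $z_1-z_2,\ z_1+z_2$ and their conjugates $\bar z_1-\bar z_2,\ \bar z_1+\bar z_2$; each of these two changes of variable being a linear isomorphism, viewing $\widetilde W[\rho]$ as a function of these four quantities loses nothing and is exactly the shape of the $H$ in the statement. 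In these variables $z_1\leftrightarrow z_2$ simply changes the sign of the first quantity $z_1-z_2$, fixing the other three; $\bar z_1\leftrightarrow\bar z_2$ changes the sign of the second, $\bar z_1-\bar z_2$, fixing the other three; and the combination $(\bar z_1-\bar z_2)(z_1-z_2)$ appearing in the exponents changes sign under either.

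Next I would make the Gaussian gauge change, writing $\widetilde W[\rho]=e^{\lambda(\bar z_1-\bar z_2)(z_1-z_2)/\hbar}H$ with $H$ a function of the four combinations above, and substitute into the two identities. Because $(\bar z_1-\bar z_2)(z_1-z_2)$ and the relevant argument of $H$ both change sign under the corresponding swap, each identity collapses, after cancelling the exponential, to a bare parity condition provided the coefficient of the residual exponential vanishes; a one-line computation ($2\lambda+\tfrac12=0$) shows this happens exactly for $\lambda=-\tfrac14$, and then the two identities read as: $H$ is even in $z_1-z_2$ (from the first) and even in $\bar z_1-\bar z_2$ (from the second). Since ``bosonic'' means precisely $U_{1\leftrightarrow2}\rho=\rho$ and $V_{1\leftrightarrow2}\rho=\rho$, that is both identities, this proves the equivalence, with the Gaussian prefactor $e^{-(\bar z_1-\bar z_2)(z_1-z_2)/4\hbar}$ --- a decaying weight, consistent with $\widetilde W[\rho]\ge 0$ --- multiplying an $H$ separately even in its first two arguments.

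The only delicate point, and the step I would treat most carefully, is the legitimacy of the substitutions $z_i\leftrightarrow z_j$ acting on the holomorphic variables alone with the antiholomorphic ones frozen, together with the attendant sign bookkeeping: this is sound because, exactly as already used in the Lemma, $e^{|q|^2/\hbar}\widetilde W[\rho](Z,\bar Z)$ is for fixed $\bar Z$ an entire function of $Z$, and conversely, so these are honest evaluations of a holomorphic function rather than purely formal manipulations; no positivity, normalisation or regularity of $H$ is invoked, since only the symmetry type of the symbol is at stake. As an independent check one can re-run the computation in the Bargmann form $\widetilde W[\rho]=c_\hbar\,e^{-|q|^2/\hbar}F(\bar Z,Z)$ with $F$ entire in each argument: the bosonic hypothesis makes $F$ symmetric under $z_1\leftrightarrow z_2$ in its second argument and under $\bar z_1\leftrightarrow\bar z_2$ in its first --- hence even in $z_1-z_2$ and in $\bar z_1-\bar z_2$ --- while the change of variables rewrites $e^{-|q|^2/\hbar}$ as $e^{-(\bar z_1-\bar z_2)(z_1-z_2)/4\hbar}$ times a factor manifestly even in both differences, which is absorbed into $F$ to yield $H$.
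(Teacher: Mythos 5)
Your argument is correct and is the natural one: the paper states this corollary without proof, and the intended derivation is exactly your reduction to the preceding corollary, followed by the linear change of variables and the Gaussian gauge with $\lambda=-\tfrac14$. One point you should make explicit rather than leave implicit: the prefactor your computation actually produces is $e^{-\frac{(\bar z_1-\bar z_2)(z_1-z_2)}{4\hbar}}$, whereas the statement as printed has $e^{+\frac{(\bar z_1-\bar z_2)(z_1-z_2)}{4\hbar}}$. Your sign is the right one. For instance, for $\rho=|\varphi_{z_0}\otimes\varphi_{z_0}\rangle\langle\varphi_{z_0}\otimes\varphi_{z_0}|$, which is bosonic, one finds $\widetilde W[\rho]\propto e^{-|z_1-z_2|^2/4\hbar}\,e^{-|z_1+z_2-2z_0|^2/4\hbar}$, i.e.\ a \emph{decaying} Gaussian in $z_1-z_2$ times a function of $z_1+z_2$ alone; moreover the two normalizations are not interchangeable, since passing from one to the other multiplies $H$ by $e^{-(\bar z_1-\bar z_2)(z_1-z_2)/2\hbar}$, which is \emph{not} even in $z_1-z_2$ with $\bar z_1-\bar z_2$ frozen. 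So the printed exponent is a sign typo and your proof establishes the corrected statement; you should flag that discrepancy explicitly instead of silently writing the other sign. Your attention to the legitimacy of the frozen-variable substitutions, via the sesquiholomorphic (Bargmann) structure of $e^{|Z|^2/2\hbar}\widetilde W[\rho]$, is exactly the justification needed and mirrors what the preceding Lemma already uses.
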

\section{Wigner}\label{wig}
The Wigner function of a density matrix is nothing but its Weyl symbol, divided by $(2\pi\hbar)^{dN}$. More precisely the Wigner function of $\rho$ is defined as 
\be\label{defwig}
W[\rho](X,\Xi)=\int_{\bR^{2dN}}
\rho(X+\hbar\frac\delta2,X-\hbar\frac\delta2)e^{i\frac{X.\Xi}\hbar}d\delta
\ee
At the contrary of the Husimi function, $W[\rho]$ is not positive, but its main elementary properties are
\begin{eqnarray}\label{elempropwig}
\int_{\bR^{2dN}}W[\rho](X,\Xi)dXd\xi=\Tr\rho&=&1\\
\mbox{ and }&&\nonumber\\
\frac1{(2\pi\hbar)^{dN}}\int_{\bR^{2dN}}W[\rho](X,\Xi0W[\rho'](X,\Xi)dXd\Xi&=&\Tr{(\rho\rho')}.\nonumber
\end{eqnarray}

Let us now define the semiclassical 
 symplectic Fourier transform as 
\[
f({\widehat{q,p}}^\hbar)
=
\frac1{(2\pi\hbar)^d}
\int_{\bR^d\times\bR^d} f(x,\xi)e^{i\frac{q\xi-px}\hbar}dxd\xi.
\]
Note that, at the difference of the usual Fourier transform:
\[
f(\widehat{\widehat{x,\xi}^\hbar}^\hbar)=f(x,\xi)
\]
Let $a_\mp=\frac{a_i\mp a_j}{\sqrt2}$ for $a=q,p,y,\xi$. And let omit the dependence in the variable $q_1,\dots,q_{i-1},q_{i+1},\dots,q_{j-1},q_{j+1},\dots, q_N$ and the same for $p$.

We denote
\[
W^{\frac\pi2}[\rho](x_+,\xi_+;x_-,\xi_-)=W[\rho](x_i,x_j;\xi_i,\xi_j).
\]\begin{Lem}
\[
W^{\frac\pi2}[U_{i\leftrightarrow j}\rho](q_+,p_+;p_-,q_-)=
W^{\frac\pi2}[\rho](q_+,p_+;\widehat{q_-,p_-}^\hbar)
\]
\[
W^{\frac\pi2}[V_{i\leftrightarrow j}\rho](q_+,p_+;p_-,q_-)=
W^{\frac\pi2}[\rho](q_+,p_+;\widehat{-q_-,-p_-}^\hbar)
\]
\end{Lem}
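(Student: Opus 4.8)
The plan is to compute the Weyl/Wigner symbol of the permuted density matrix directly from the definition \eqref{defwig}, tracking how the coordinate exchange $y_i \leftrightarrow y_j$ (resp. $x_i \leftrightarrow x_j$) propagates through the integral transform, and then re-express the resulting formula in the rotated coordinates $(q_\pm, p_\pm)$ and the partial symplectic Fourier transform in the ``$-$'' variables. First I would write out $W[U_{i\leftrightarrow j}\rho](X,\Xi)$ by substituting $\rho(X+\hbar\frac\delta2, X-\hbar\frac\delta2)|_{y_i\leftrightarrow y_j}$ into \eqref{defwig}. The effect of swapping $y_i\leftrightarrow y_j$ on the arguments $X\pm\hbar\frac\delta2$ is to mix the two slots $i,j$: in the $\sqrt2$-rotated coordinates $x_\mp = (x_i\mp x_j)/\sqrt2$, $\delta_\mp=(\delta_i\mp\delta_j)/\sqrt2$, the swap fixes the ``$+$'' combination and, in the ``$-$'' combination, exchanges the roles of the midpoint variable $x_-$ and the displacement variable $\hbar\delta_-$ (up to signs). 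Concretely, where the unpermuted kernel has midpoint $x_-$ and separation $\hbar\delta_-$, the permuted one reads off midpoint $\frac\hbar2\delta_-$-type data and separation $2x_-$-type data; this is exactly the structure that, after carrying out the $\delta_-$-integration, turns into a symplectic Fourier transform in the pair $(x_-,\xi_-)$.

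\textbf{Key steps.} (1) Insert the permuted kernel into \eqref{defwig} and change variables to $(x_+,x_-,\delta_+,\delta_-)$; the ``$+$''-part and the spectator variables pass through untouched, so the whole computation reduces to a two-variable ($d$-dimensional) identity in $(x_-,\delta_-)$ paired against $(\xi_-,\cdot)$. (2) Identify the phase: the exponential $e^{iX\cdot\Xi/\hbar}$ restricted to the ``$-$'' block contributes $e^{i(x_-\xi_- + \dots)/\hbar}$, and after the coordinate swap the bilinear pairing between the surviving integration variable and $(x_-,\xi_-)$ becomes precisely the kernel $e^{i(q\xi-px)/\hbar}$ of the semiclassical symplectic Fourier transform defined just above the Lemma. (3) Recognize that $W^{\pi/2}[\rho]$ is, by definition, $W[\rho]$ with the ``$-$'' momentum and position slots written as $(p_-,q_-)$ in that order (the ``Fourier rotation'' $q\to p,\ p\to -q$ baked into the notation $W^{\pi/2}$), so that the Fourier transform in $(x_-,\xi_-)$ becomes the hatted argument $\widehat{q_-,p_-}^\hbar$ in the displayed formula. (4) For $V_{i\leftrightarrow j}$, repeat with $x_i\leftrightarrow x_j$ instead of $y_i\leftrightarrow y_j$; the only change is an extra overall sign in the ``$-$'' block (since now it is the left argument that is permuted rather than the right), producing $\widehat{-q_-,-p_-}^\hbar$ instead of $\widehat{q_-,p_-}^\hbar$. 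One should double-check consistency with the Husimi Lemma and with \eqref{indis}: composing the two formulas must return $W^{\pi/2}[\rho]$ with $(q_-,p_-)\to(-q_-,-p_-)$, i.e. the plain exchange $z_i\leftrightarrow z_j$, $\bar z_i\leftrightarrow\bar z_j$, matching the ``Note also'' remark after the Husimi Lemma and the fact that $\widehat{\widehat{\cdot}^\hbar}^\hbar$ is the identity up to a sign (since $f(\widehat{\widehat{x,\xi}^\hbar}^\hbar)=f(x,\xi)$, the square of the ``$-$''-Fourier transform composed with the sign flip is the identity).

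\textbf{Main obstacle.} The delicate point is bookkeeping of the three intertwined normalizations and sign conventions: the $\sqrt2$-rotation on $(i,j)$, the built-in $\pi/2$ ``Fourier rotation'' hidden in the notation $W^{\pi/2}$, and the sign/factor conventions in the semiclassical symplectic Fourier transform $f(\widehat{q,p}^\hbar)$ (which, crucially, is an involution rather than an order-four operator, so a stray sign cannot be absorbed). I would pin these down once and for all on a single $d$-dimensional model integral before assembling the general statement, since every term in the final two displays is determined by getting that one model computation's signs right; the rest is a tensor-product factorization over the spectator variables and is purely routine.
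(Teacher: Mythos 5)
Your proposal is correct and follows essentially the same route as the paper: insert the permuted kernel into the definition \eqref{defwig}, pass to the $\sqrt2$-rotated coordinates so that the ``$+$'' block and spectator variables pass through untouched, and recognize that in the ``$-$'' block the swap exchanges the midpoint and displacement data, which after the $\delta_-$-integration (equivalently, after inserting the Wigner inversion formula, as the paper does to produce the delta functions fixing $y_+=q_+$, $\eta_+=p_+$) yields exactly the semiclassical symplectic Fourier kernel $e^{i(q_-\eta_--p_-y_-)/\hbar}$, with the sign flip for $V_{i\leftrightarrow j}$ arising just as you describe. Your emphasis on pinning down the normalization and sign conventions in a single $d$-dimensional model computation is well placed, since that is precisely where the paper's own calculation is at its most delicate.
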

Note that
\[
W[V_{i\leftrightarrow j}U_{i\leftrightarrow j}\rho](q_1,p_1,\dots,q_i,p_i,\dots,q_j,p_j,\dots,q_n,p_n)
=\]
\[
W[\rho]
(q_1,p_1,\dots,q_{i-1},p_{i-1},q_j,p_j,\dots,q_{j-1},p_{j-1},
q_i,p_i,\dots,q_n,p_n)
\]
\begin{proof}
It is enough to isolate the $ij$ block.

$U_{i\leftrightarrow j}\rho(x_i,x_j;y_1,y_j)=\rho(x_i,x_j:y_j,y_i)$.  So
\large
\bea
&&(2\pi\hbar)^{2d}W[U_{i\leftrightarrow j}\rho](q_i,q_j;p_i,p_j)\nonumber\\
&=&
\int d\delta_id\delta_j\rho(q_i+\delta_i,q_j+\delta_j;q_j-\delta_j,q_i-\delta_i)e^{-2ip\cdot\delta/\hbar}\nonumber\\
&=&\int
d\delta d\eta W[\rho]\nonumber\\
&&
((q_i+q_j+\delta_i-\delta_j)/2,(q_j+q_i+\delta_j-\delta_i)/2;\eta)e^{\frac i\hbar(q_i-q_j+\delta_i+\delta_j)\eta_i+\eta_j(q_j-q_i+\delta_i+\delta_j)}\nonumber\\
&&e^{-2ip\cdot\delta/\hbar}\ (=e^{-2ip_1(\delta/\hbar})\nonumber\\
&=&\nonumber
\int d\delta d\eta W[\rho](q_i+q_j+\delta,q_i+q_j-\delta,\eta)
\delta(\eta_i+\eta_j-(p_i+p_j))
e^{-2i(p_i-p_j)\delta/\hbar}\\
&=&
\int dy d\eta 
\delta(\eta_i+\eta_j-(p_i+p_j))
\delta(y_i+y_j-(q_i+q_j))
\nonumber\\
&&
e^{i((q_i-q_j)(\eta_i-\eta_j)-(p_i-p_j)(y_i-y_j))/\hbar}
W[\rho](y;\eta)\nonumber
\eea
\LARGE
Let us perform the change of variable $a_\mp=\frac{a_i\mp a_j}{\sqrt2}$ for $a=q,p,y,\eta$. This correspond to the metaplectic mapping:
\[
R(\frac\pi2)=
\scriptsize
\begin{pmatrix}{\begin{pmatrix}\frac1{\sqrt2}&-\frac1{\sqrt2}\\\frac1{\sqrt2}&\frac1{\sqrt2}
\end{pmatrix}}&0&0&0\\
0&{\begin{pmatrix}\frac1{\sqrt2}&-\frac1{\sqrt2}\\\frac1{\sqrt2}&\frac1{\sqrt2}
\end{pmatrix}}&0&0\\
0&0&{\begin{pmatrix}\frac1{\sqrt2}&-\frac1{\sqrt2}\\\frac1{\sqrt2}&\frac1{\sqrt2}
\end{pmatrix}}&0\\
0&0&0&{\begin{pmatrix}\frac1{\sqrt2}&-\frac1{\sqrt2}\\-\frac1{\sqrt2}&\frac1{\sqrt2}
\end{pmatrix}}
\end{pmatrix}
\]
on
\[
\begin{pmatrix}
q_i\\q_j\\\xi_i\\\xi_j\\p_i\\p_j\\x_i\\x_j
\end{pmatrix}
\in T^*(T^*\bR^d, dq\wedge d\xi+dp\wedge dx).
\]

 Note that both $$dq\wedge dp=dq_+\wedge dp_++dq_-\wedge dp_-=d\tilde q\wedge d\tilde p$$ and
 $$dq\wedge d\xi+dp\wedge dx=d\tilde q\wedge d\tilde \xi+d\tilde p\wedge d\tilde x$$ where $\tilde a=\begin{pmatrix}a_+\\a_-\end{pmatrix}$.
 
We denote $W^{\frac\pi2}[\rho](y_+,\eta_+;\eta_-,y_-)$ and
$W^{\frac\pi2}[U_{i\leftrightarrow j}\rho](q_+,p_+;p_-,q_-)$.
We get
\[
W^{\frac\pi2}[U_{i\leftrightarrow j}\rho](q_+,p_+;p_-,q_-)=
W^{\frac\pi2}[\rho](q_+,p_+;\widehat{q_-,p_-}^\hbar)
\]
\end{proof}
Let us call now $W^-$ the Wigner function (done with the symplectic Fourier transform) on the two variables $q_-,p_-$, namely, 
\large
$$\small W^-\big[W^{\frac\pi2}[\rho]\big](q_+,p_+|p_-,q_-;x_-,\xi_-)=$$
$$\small\int\overline{W^{\frac\pi2}[\rho]\big](q_+,p_+,p_-+2\delta\hbar,q_-+2\delta'\hbar)}
$$
$$
W^{\frac\pi2}[\rho]\big](q_+,p_+,p_--2\delta\hbar,q_--2\delta'\hbar)e^{i(x_-\delta-\xi_-\delta')}
d\delta d\delta'.$$
\LARGE
\vskip 0.3cm
$ W^-\big[W^{\frac\pi2}[\rho]\big]$ 
 lives on $T^*(\bR^d_{q_+})\times T^*(\bR^{2d}_{(p_-,q_-)})$
equiped with the symplectic form
$$
dq_+\wedge qp_+
+dq_-\wedge d\xi_-+dp_-\wedge dx_-.
$$

One has
\[
W^-\big[W^{\frac\pi2}[U_{i\leftrightarrow j}\rho]\big](q_+,p_+|p_-,q_-;x_-,\xi_-)=
\]
\[
W^-\big[W^{\frac\pi2}[\rho]]
(q_+,p_+|-\xi_-,-x_-;q_-,p_-)
\]
That is, the action of $U_{i\leftrightarrow j}$ on $\rho$ is seen on $ W^-\big[W^{\frac\pi2}[\rho]\big]$ by the pointwise action of the following matrix:
\[
S=
\begin{pmatrix}
S_+&0\\0&S_-
\end{pmatrix}
=
\begin{pmatrix}
\begin{pmatrix}1&0&0&0\\
0&1&0&0\\
0&0&1&0\\
0&0&0&1\\
\end{pmatrix}&0\\
0&
\begin{pmatrix}
0&0&0&1\\
0&0&-1&0\\
0&1&0&0\\
-1&0&0&0\\
\end{pmatrix}
\end{pmatrix}
\mbox { on }
\begin{pmatrix}
q_+\\ \xi_+\\ p_+\\ x_+\\ q_-\\ \xi_-\\ p_-\\ x_-
\end{pmatrix}
\]

and this matrix is symplectic.

Defining now $z_\pm=p_\pm+ix_\pm,\ \theta_\pm=q_\pm+i\xi_\pm$ we find that $S$ becomes on these new variables,
$S^c=(S^c_+,S^c_-)=(I,i\scriptsize{\begin{pmatrix}
0&1\\1&0
\end{pmatrix}}
)
$
And so the complex  metapletic transform  associated is 
\[
S^c_W=\begin{pmatrix}
0&i\\i&o
\end{pmatrix},\ \det{S^C_W}=1.
\]

\section{T\"oplitz}\label{top}
Let $\rho$ be a  T\"oplitz operator of symbol.
$
\utilde W[\rho].
$
This means that $\rho$ can ve written as
\be\label{deftop}
\rho
=
\frac1{(2\pi\hbar)^{dN}}
\int_{\bC^{dN}}
\utilde W[\rho](Z,\bar Z)|\varphi_Z\rangle\langle\varphi_Z|dZ
\ee
(here the integral as to be understood in the weak sense on $\cH$).
Elementary properties of $\utilde W[\rho]$ are
\be\label{elemproptop}
\utilde W[\rho]\geq 0\Rightarrow\rho>0,\mbox{ and }\int_{\bC^{dN}}\utilde W[\rho]dZ=\Tr\rho.
\ee
Moreover,the secondd property of \eqref{elempropwig} can be ``disintegrated" in the following couplig between Husimi and T\"oplitz settings:
\be\label{elemhustop}
\int_{\bC^{dN}}\widetilde W[\rho](Z,\bar Z)\utilde W[\rho'](Z,\bar Z)dZ
=
\Tr{(\rho\rho')}.
\ee

\begin{Lem}
\large
\[
\utilde W[U_{i\leftrightarrow j}\rho](z_i,\bar z_i,z_j,\bar z_j)=
e^{-\frac{|z_i-z_j|^2}{2\hbar}}
\utilde W[\rho](z_j,\bar z_i,z_i,\bar z_j)
\]
\[
\utilde W[U_{i\leftrightarrow j}\rho](q_-,p_-;q_+,p_+)
=
e^{-\frac{q_-^2+p_-^2}{2\hbar}}
\utilde W[\rho](-ip_-,iq_-;q_+,p_+)
\]
\vskip 0.5cm
\[
\utilde W[V_{i\leftrightarrow j}\rho](q_1,p_1,\dots,q_i,p_i,\dots,q_j,p_j,\dots,q_n,p_n)
=
e^{-\frac{(q_i-q_j)^2+(p_i-p_j)^2}{2\hbar}}
\]
\[\times
\utilde W[\rho]
(q_1,p_1,\dots,q_{i-1},p_{i-1},-ip_j,iq_j,\dots,q_{j-1},p_{j-1},
-ip_i,iq_i,\dots,q_n,p_n)
\]
\[
=
e^{-\frac{(q_i-q_j)^2+(p_i-p_j)^2}{2\hbar}}
\utilde W[\rho]|_{\substack{\ \\z_i\leftrightarrow -z_j\\\bar{z_i}\leftrightarrow\bar{z_j}}},\ z_i=q_i+ip_i.
\]

\end{Lem}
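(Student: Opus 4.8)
The plan is to reduce everything, as in the Wigner proof, to the $ij$-block and to compute directly with the defining integral \eqref{deftop} of the Töplitz quantization. First I would isolate the $ij$ factor: since $U_{i\leftrightarrow j}$ only exchanges $y_i$ and $y_j$ in the kernel $\rho(X;Y)$, the coherent-state decomposition \eqref{deftop} shows that $U_{i\leftrightarrow j}\rho$ is again of Töplitz type on the other coordinates, so it suffices to work with $d$-dimensional states and a two-factor kernel $\rho(x_i,x_j;y_i,y_j)$ represented as $\int \utilde W[\rho](z_i,\bar z_i,z_j,\bar z_j)\,\varphi_{z_i}(x_i)\varphi_{z_j}(x_j)\overline{\varphi_{z_i}(y_i)\varphi_{z_j}(y_j)}\,dz_i dz_j$ up to the normalisation factor.

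Next I would apply $U_{i\leftrightarrow j}$, i.e. swap $y_i\leftrightarrow y_j$ in the integrand; this turns the kernel of the projector $|\varphi_{z_i}\rangle\langle\varphi_{z_i}|\otimes|\varphi_{z_j}\rangle\langle\varphi_{z_j}|$ into $\varphi_{z_i}(x_i)\varphi_{z_j}(x_j)\overline{\varphi_{z_i}(y_j)\varphi_{z_j}(y_i)}$, which is no longer a projector but a rank-one operator with ``bra'' and ``ket'' built from different labels. The key computational step is the Gaussian overlap identity for coherent states: $\langle\varphi_{z}|\varphi_{w}\rangle$ is an explicit Gaussian in $\bar z, w$, and more to the point one must recognise the ``twisted'' kernel $\overline{\varphi_{z_i}(y_j)\varphi_{z_j}(y_i)}$ as, after completing the square in the $y$-variables, proportional to the kernel of a genuine Töplitz projector with relabelled complex arguments — precisely $|\varphi_{w_i}\rangle\langle\varphi_{w_i}|$ with the holomorphic label $w_i$ determined by the old $z_j$ and the antiholomorphic label determined by the old $z_i$. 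Carrying this out produces both the prefactor $e^{-|z_i-z_j|^2/2\hbar}$ and the substitution $(z_i,\bar z_i,z_j,\bar z_j)\mapsto(z_j,\bar z_i,z_i,\bar z_j)$ in $\utilde W[\rho]$, which is the first displayed formula; rewriting $q_-,p_-$ via \eqref{defzpm} and using $z_j=\bar z_i$-free bookkeeping gives the second line, since under $z_i\leftrightarrow z_j$ with $\bar z_i,\bar z_j$ fixed one has $z_-\to -z_-$, i.e. $q_-+ip_-\to -(q_-+ip_-)$ while $q_--ip_-$ is unchanged, forcing $q_-\to -ip_-$, $p_-\to iq_-$. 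The $V_{i\leftrightarrow j}$ statement then follows by the same argument with the roles of $x$ and $y$ interchanged, or more quickly by combining the $U$ result with the relation \eqref{indis} and the observation that $V_{i\leftrightarrow j}=U_{i\leftrightarrow j}\circ(V_{i\leftrightarrow j}U_{i\leftrightarrow j})$ together with the already-noted clean action of $V_{i\leftrightarrow j}U_{i\leftrightarrow j}$ (plain exchange of the $(q_i,p_i)$ and $(q_j,p_j)$ pairs).

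The main obstacle I expect is the Gaussian bookkeeping in identifying the twisted rank-one kernel with a relabelled Töplitz projector: one must check that after the $y$-integration the exponent organises into exactly the coherent-state form $-\frac{(y-q')^2}{2\hbar}+i\frac{p'\cdot y}{\hbar}$ with complexified $q',p'$ (equivalently a purely holomorphic/antiholomorphic split $z\mapsto z_j$, $\bar z\mapsto\bar z_i$), and that the leftover $z_i,z_j$-dependent factor is precisely $e^{-|z_i-z_j|^2/2\hbar}$ and not some other Gaussian — this is where the asymmetry between this weight and the Husimi weight $e^{-(\bar z_i-\bar z_j)(z_i-z_j)/2\hbar}$ of Section~\ref{hus} shows up, and a sign or factor-of-two slip there would propagate into the wrong anticanonical matrix later. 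A useful consistency check is to verify that composing the two displayed $U$-formulas (or applying the $V$-formula after the $U$-formula) reproduces the plain-exchange action of $V_{i\leftrightarrow j}U_{i\leftrightarrow j}$ with prefactor $1$, and to cross-check the prefactor against \eqref{elemhustop} paired with the already-proved Husimi Lemma, since the Husimi and Töplitz weights must be mutually consistent under the duality $\int \widetilde W[\rho]\,\utilde W[\rho']\,dZ=\Tr(\rho\rho')$.
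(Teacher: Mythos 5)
The paper states this Lemma without any proof, so the only internal benchmark is the Bargmann-type kernel computation of Section \ref{offtop}, and your strategy is essentially that one: push the coordinate swap through the coherent-state decomposition \eqref{deftop}, observe that it turns the diagonal projector $|\varphi_{z_i}\otimes\varphi_{z_j}\rangle\langle\varphi_{z_i}\otimes\varphi_{z_j}|$ into the off-diagonal one $|\varphi_{z_i}\otimes\varphi_{z_j}\rangle\langle\varphi_{z_j}\otimes\varphi_{z_i}|$, and then use the holomorphic/antiholomorphic factorization of $\varphi_z(x)\overline{\varphi_{z'}(y)}$ to rewrite the off-diagonal projector as a Gaussian weight times the diagonal kernel continued to non-conjugate labels. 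One correction to the mechanics: the identification is pointwise in $(x,y)$ --- there is no ``$y$-integration'' and no square to complete over $y$; what you must compute is the ratio of the two label-dependent Gaussian normalizations (using $q^2=(z+\bar z)^2/4$), and that ratio does come out to $e^{-|z_i-z_j|^2/2\hbar}$.

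The genuine gap is the final step, which you treat as automatic. Having written $U_{i\leftrightarrow j}\rho$ as an integral of the original symbol against weight times the \emph{complexified} diagonal kernel at the twisted labels $(z_i,\bar z_j)$ and $(z_j,\bar z_i)$, you still must convert this into a bona fide Töplitz representation, i.e.\ an integral of honest projectors $|\varphi_w\rangle\langle\varphi_w|$ against a new symbol. That requires relabelling $(u_i,v_i,u_j,v_j)=(z_i,\bar z_j,z_j,\bar z_i)$ and deforming the resulting integration contour $\{v_i=\overline{u_j},\,v_j=\overline{u_i}\}$ back to the totally real slice $\{v=\bar u\}$ --- a purely formal step (it needs $\utilde W[\rho]$ to extend holomorphically with controlled growth), and the step that decides on which side of the identity the Gaussian sits, hence whether the weight is $e^{-|z_i-z_j|^2/2\hbar}$ or its reciprocal and whether the exchange lands on the holomorphic or the antiholomorphic slots. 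This is exactly the difficulty the off-diagonal calculus of Section \ref{offtop} is introduced to bypass, and it is where a blind computation can silently produce a formula differing from the stated one by a sign in the exponent or by which pair of variables is swapped. For the same reason you should actually carry out, not merely propose, your consistency checks: composing your $U$- and $V$-formulas must reproduce the weight-free plain exchange for $V_{i\leftrightarrow j}U_{i\leftrightarrow j}$, and the $V$-formula must reduce to ``same action as on the Husimi function, different weight'' as asserted right after the Lemma; running these checks is also how you would catch the normalization question in passing from $|z_i-z_j|^2$ to $q_-^2+p_-^2$ under the $\sqrt2$-convention of \eqref{defzpm}.
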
 

In other words, the exchange action on the T\"oplitz symbol is the same as the one on the Husimi function, modulo a different gaussian weight.
\section{On Wigner again}\label{ow}
Let us denote
\[
 U_{i\leftrightarrow j}^WW[\rho]=W[U_{i\leftrightarrow j}\rho]
 \]
 Let us moreover denote by $W^2[\rho]$ the Wigner function of the Wigner function of $\rho$ (see footnote 1):
 \[
 W^2[\rho]=W[W[\rho]].
 \]
 Let us denote by $Q_i=(q_i,\xi_i)$ and $P_i=(p_i,x_i),\ i=1,\dots, N,$ the variables in $T^*(T^*\bR^d))$. We define:
 \[
 Q_i^t=(\xi_i,q_i),\ \ \ P_i^t=(x_i,p_i).
 \]
 
 \begin{Lem}
 \large
\[
 W^2[U_{i\leftrightarrow j}\rho](Q_1,P_1,\dots,Q_i,P_i,\dots,Q_j,P_j,\dots,Q_n,P_n)
=\]
\[
 W^2[\rho]
(Q_1,P_1,\dots,Q_{i-1},P_{i-1},P^t_j,-Q^t_j,\dots,Q_{j-1},P_{j-1},
P^t_i,-Q^t_i,\dots,Q_n,P_n)
\]
\[
 W^2[U_{i\leftrightarrow j}\rho]=W[U^W_{i\leftrightarrow j}W[\rho]]=
W^2[\rho]|_{\substack{\ \\Q_i\leftrightarrow P^t_j\\P_i\leftrightarrow-Q^t_j}}.
\]
\vskip 1cm
\[
 W^2[V_{i\leftrightarrow j}\rho](Q_1,P_1,\dots,Q_i,P_i,\dots,Q_j,P_j,\dots,Q_n,P_n)
=\]
\[
 W^2[\rho]
(Q_1,P_1,\dots,Q_{i-1},P_{i-1},-P^t_j,Q^t_j,\dots,Q_{j-1},P_{j-1},
-P^t_i,Q^t_i,\dots,Q_n,P_n)
\]
\[
 W^2[V_{i\leftrightarrow j}\rho]=W[V^W_{i\leftrightarrow j}W[\rho]]=
W^2[\rho]|_{\substack{\ \\Q_i\leftrightarrow -P^t_j\\P_i\leftrightarrow Q^t_j}}.
\]
%
\end{Lem}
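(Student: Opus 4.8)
The plan is to reduce the statement to the Wigner Lemma of Section~\ref{wig} together with the metaplectic covariance of the Weyl/Wigner calculus, used this time in the doubled phase space $T^*(T^*\bR^{dN})$ carrying $W^2[\rho]$. By definition of $W^2$ and of $U^W_{i\leftrightarrow j}$ one has $W^2[U_{i\leftrightarrow j}\rho]=W[U^W_{i\leftrightarrow j}W[\rho]]$, and likewise $W^2[V_{i\leftrightarrow j}\rho]=W[V^W_{i\leftrightarrow j}W[\rho]]$, so the whole matter is to compute the conjugated operations $W\circ U^W_{i\leftrightarrow j}\circ W^{-1}$ and $W\circ V^W_{i\leftrightarrow j}\circ W^{-1}$ on functions on $T^*(T^*\bR^{dN})$. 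First I would record, from the Wigner Lemma, what $U^W_{i\leftrightarrow j}$ and $V^W_{i\leftrightarrow j}$ actually are: isolating the $ij$-block and reading in the coordinates $a_\mp=(a_i\mp a_j)/\sqrt2$, the map $U^W_{i\leftrightarrow j}$ leaves the $(q_+,p_+)$ variables untouched and acts on the $(q_-,p_-)$ variables as the semiclassical symplectic Fourier transform $\widehat{\,\cdot\,}^\hbar$ combined with a coordinate swap, while $V^W_{i\leftrightarrow j}$ is the same composed in addition with the central symmetry $(q_-,p_-)\mapsto(-q_-,-p_-)$. Thus, modulo the trivial factor on the $+$-block, both are partial symplectic Fourier transforms together with a coordinate permutation.

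The crux is a ``transfer principle'': the outer Wigner transform converts a symplectic Fourier transform in one conjugate pair of variables of its argument into a pointwise linear symplectic substitution, and converts a permutation of the argument's variables into the same permutation performed jointly on those variables and on their dual variables. Concretely, if $g$ is a phase-space function and $(u,\eta)$ one of its conjugate pairs, then $W$ applied to ``$g$ with $\widehat{\,\cdot\,}^\hbar$ carried out in the pair $(u,\eta)$'' equals $W[g]$ precomposed with the symplectic quarter-turn rotating the plane spanned by $u$ and its dual, jointly with the plane spanned by $\eta$ and its dual; this is the covariance statement ``the Fourier transform is implemented by the rotation $q\mapsto p,\ p\mapsto -q$'' carried over to the level of symbols. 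I would prove it exactly as the Wigner Lemma is proved, namely by inserting the defining integral of $\widehat{\,\cdot\,}^\hbar$ into the defining integral of the outer $W$, carrying out the Gaussian/Dirac integration, and reading off the resulting linear change of variables; the fact that $\widehat{\,\cdot\,}^\hbar$ is an involution is precisely what guarantees that the powers of $\hbar$ match and that no residual parity survives.

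With these two ingredients the computation assembles. Inserting the transfer principle into the description of $U^W_{i\leftrightarrow j}$ shows that $W\circ U^W_{i\leftrightarrow j}\circ W^{-1}$ is the identity on the $+$-block coordinates $(q_+,\xi_+,p_+,x_+)$ and, on the $-$-block coordinates $(q_-,\xi_-,p_-,x_-)$, the composition of the coordinate swap with the quarter-turns produced by $\widehat{\,\cdot\,}^\hbar$; for $V^W_{i\leftrightarrow j}$ one adds the sign coming from the central symmetry. It then remains to undo the passage to $\pm$-coordinates: this is the orthogonal, symplectic, block-diagonal change of variables $R(\frac\pi2)$ of Section~\ref{wig}, whose cotangent lift to $T^*(T^*\bR^{dN})$ is again orthogonal and symplectic, so conjugating back is immediate and yields the statement on the individual $i,j$-blocks, in which the interchange of a coordinate with its dual appears precisely as the transposition $\,{}^t$. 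Reading off the substitution then gives $Q_i\leftrightarrow P^t_j,\ P_i\leftrightarrow -Q^t_j$ for $U_{i\leftrightarrow j}$, and, with the extra signs, $Q_i\leftrightarrow -P^t_j,\ P_i\leftrightarrow Q^t_j$ for $V_{i\leftrightarrow j}$.

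The step I expect to be the main obstacle is not conceptual but is the bookkeeping inside the transfer principle: keeping straight which coordinate is exchanged with which dual variable, and with which sign, when one Wigner-transforms a symplectic Fourier transform — this is the origin of the transpositions $Q^t,P^t$ and of the minus signs, and a single sign slip contaminates the whole assembly. To pin the signs down I would use as guides the $W$-level identities of Section~\ref{wig} (in particular that the pointwise matrix $S$ there is symplectic, with complex form $S^c_W$) and the indistinguishability relation \eqref{indis}, requiring the composition of the $U_{i\leftrightarrow j}$- and $V_{i\leftrightarrow j}$-substitutions to be compatible with the substitution that $W^2$ induces from the full particle transposition $V_{i\leftrightarrow j}U_{i\leftrightarrow j}$.
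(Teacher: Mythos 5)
Your proposal is correct and follows essentially the same route as the paper: the paper offers no separate proof of this lemma, but the computation of $W^-\big[W^{\frac\pi2}[U_{i\leftrightarrow j}\rho]\big]$ at the end of Section~\ref{wig}, which turns the partial symplectic Fourier transform acting on the $(q_-,p_-)$ variables into the pointwise symplectic matrix $S$ and then reads the result back through the $\pm$ change of coordinates, is precisely your ``transfer principle'' plus the final reassembly. The sign bookkeeping you defer is indeed where the transpositions $Q^t,P^t$ and the minus signs come from, and the cross-checks you propose (symplecticity of $S$, compatibility with \eqref{indis}) are the appropriate ones.
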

So $U^W_{i\leftrightarrow j},\ V_{i\leftrightarrow j}$ are metaplectic operators  associated to canonical transforms on $T^*(T^*(\bR^{dN}))$.
\begin{Lem}
Denoting now $z_i=q_i+\xi_i,\ \theta_i=p_i+ix_i$ we have
\[
W[U^W_{i\leftrightarrow j}W[\rho]]=W^2[U_{i\leftrightarrow j}\rho]=W^2[\rho]|_{\substack{z_i\leftrightarrow iz_j\\
\theta_i\leftrightarrow i\theta_j}}
\]
\[
W^2[V_{i\leftrightarrow j}\rho]=W^2[\rho]|_{\substack{z_i\leftrightarrow -iz_j
\\
\theta_i\leftrightarrow -i\theta_j}}
\]
\end{Lem}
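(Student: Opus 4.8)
The plan is to deduce the statement from the preceding Lemma by a single linear change of variables, the one passing from the real coordinates $(q_k,\xi_k,p_k,x_k)$ to the complex ones $z_k,\theta_k$. Since $U_{i\leftrightarrow j}$ and $V_{i\leftrightarrow j}$ touch only the $i$-th and $j$-th blocks, it is enough to work on the $8d$-dimensional space carrying $(q_i,\xi_i,p_i,x_i,q_j,\xi_j,p_j,x_j)$ and on its complexification, all other variables being spectators.

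First I would make the real-linear map $T$ explicit: by the preceding Lemma $W^2[U_{i\leftrightarrow j}\rho]=W^2[\rho]\circ T$, where $T$ sends the $i$-block of its argument to $(x_j,p_j,-\xi_j,-q_j)$ and the $j$-block to $(x_i,p_i,-\xi_i,-q_i)$. A short computation gives $T^2=-\Id$ on this block, so over $\bC$ the map $T$ is conjugate to multiplication by $\pm i$ on a $4d$-dimensional holomorphic subspace; the content of the Lemma is exactly the identification of that subspace with the coordinates $z_i,\theta_i,z_j,\theta_j$, the exchange $i\leftrightarrow j$ and the factor $\pm i$ being the two residual data.

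The computation is then routine. One checks that $T$ splits each block into the two sign-twisted two-cycles $q_k\mapsto x_{k'}\mapsto -q_k$ and $\xi_k\mapsto p_{k'}\mapsto -\xi_k$, where $k'$ is the partner index; taking for $z_k$ and $\theta_k$ the complex combinations adapted to these two-cycles turns each of them into multiplication by $i$ (or $-i$, according to the orientation chosen) while swapping the indices $i$ and $j$. Separating real and imaginary parts in $Q_i\leftrightarrow P_j^t$, $P_i\leftrightarrow -Q_j^t$ then gives $z_i\leftrightarrow iz_j$, $\theta_i\leftrightarrow i\theta_j$. For $V_{i\leftrightarrow j}$ the two displayed formulas of the preceding Lemma show that the relevant substitution is exactly $-T$, so the same computation produces the same relation with $i$ replaced by $-i$; and, up to the sign conventions, composing the two reproduces the honest exchange of the blocks $i$ and $j$ recorded in Section~\ref{wig}.

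The one genuinely delicate point is the sign bookkeeping. One must fix once and for all the orientations of $z_k$ and $\theta_k$, keep track of the minus signs attached to $Q_j^t$ and $P_j^t$, and absorb the residual $\pm$ ambiguity of the metaplectic lift used in the preceding Lemma, so that the conjugations generated by the transpositions $(\cdot)^t$ cancel and leave a genuinely holomorphic exchange rather than an antiholomorphic one. Once these choices are pinned down the two displayed identities follow, exhibiting in complex form the canonical transforms on $T^*(T^*(\bR^{dN}))$ of the preceding Lemma.
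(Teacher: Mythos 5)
The paper states this Lemma without proof, so your derivation has to stand on its own. Your overall strategy is certainly the intended one: pull the preceding Lemma's substitution $T$ (with $T^2=-\Id$ on the $ij$-block) through a complex-linear change of variables, and you correctly identify the sign-twisted two-cycles $q_k\mapsto x_{k'}\mapsto -q_k$ and $\xi_k\mapsto p_{k'}\mapsto -\xi_k$. But the final step — ``separating real and imaginary parts \dots then gives $z_i\leftrightarrow iz_j$, $\theta_i\leftrightarrow i\theta_j$'' — is false for the coordinates as defined in the statement, and this is not a matter of sign bookkeeping. The two-cycles you exhibit pair $q$ with $x$ and $\xi$ with $p$; the statement's coordinates $z_i=q_i+i\xi_i$, $\theta_i=p_i+ix_i$ pair $q$ with $\xi$ and $p$ with $x$. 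Since $T$ maps the span of $(q_i,\xi_i)$ onto the span of $(x_j,p_j)$, any complex combination of $q_i,\xi_i$ alone pulls back to a combination of $\theta_j,\bar\theta_j$ alone and can never be proportional to $z_j$. Explicitly,
\[
z_i\circ T=x_j+ip_j=i\,\overline{\theta_j},\qquad \theta_i\circ T=-\xi_j-iq_j=-i\,\overline{z_j},
\]
an antiholomorphic, type-exchanging map (consistent, incidentally, with the matrix $\left(\begin{smallmatrix}0&i\\ i&0\end{smallmatrix}\right)$ exchanging $z$ and $\theta$ found at the end of Section~\ref{wig}), not $z_i\leftrightarrow iz_j$. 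The identity in the Lemma does hold, but only for the coordinates adapted to your own two-cycles, namely $z_k:=q_k-ix_k$, $\theta_k:=\xi_k-ip_k$, for which one checks $z_i\circ T=x_j+iq_j=iz_j$ and $\theta_i\circ T=p_j+i\xi_j=i\theta_j$, with $-i$ for the substitution $-T$ of $V_{i\leftrightarrow j}$. So your proof needs to replace the printed definitions of $z_i,\theta_i$ by the adapted ones (they are evidently misprinted in the statement); as written, the key computation you assert does not come out.

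A second, smaller point: your closing consistency check fails. If $V_{i\leftrightarrow j}$ corresponds to exactly $-T$, then composing with $T$ gives $-T^2=\Id$, i.e.\ $W^2[V_{i\leftrightarrow j}U_{i\leftrightarrow j}\rho]=W^2[\rho]$, whereas Section~\ref{wig} says $V_{i\leftrightarrow j}U_{i\leftrightarrow j}$ acts on $W[\rho]$ (hence on $W^2[\rho]$) by the genuine exchange of the blocks $i$ and $j$. This does not reproduce the exchange ``up to sign conventions''; it signals that the signs in the $V$-formula of the preceding Lemma (or in your reading of it) need to be rechecked rather than cited as confirmation.
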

So $U^W_{i\leftrightarrow j},\ V_{i\leftrightarrow j}$ are metaplectic operators  associated to complex canonical transforms on the complexification of $T^*(\bR^{dN})$.

\section{Off-diagonal T\"oplitz representations}\label{offtop}

In this section, we take $d=1$ and $N=2$. 

A density matrix $\rho$ has an integral kernel $\rho(x_1,x_2;y_1,y_2)$ and 
$$
(U\rho)(x_1,x_2;y_1,y_2)=\rho(x_1,x_2;y_2,y_1)
$$
$$
(V\rho)(x_1,x_2;y_1,y_2)=\rho(x_2,x_1;y_1,y_2).
$$
therefore, performing a change of variables 
$$x=(x_1-x_2)/\sqrt2, x' =(x_1+x_2)/\sqrt2,$$
$$
y=(y_1-y_2)/\sqrt2,y'=(y_1-y_2)/\sqrt2,
$$
one get, with a slight abuse of notation that
$$
U\rho(x,y;x',y')=\rho(x,-y:x',y')
$$
$$
V\rho(x,y;x',y')=\rho(-x,y:x',y')
$$ 
In the rest of this section we will omit the variables $x',y'$.

Let us consider a  T\"oplitz operator
\[
H=\int h(z)\vert\psi_z\rangle\langle\psi _z\vert\frac{dzd\bar z}{2\pi\hbar},
\]
where, for $z=q+ip$,
$$
\psi _{z}=
\frac{e^{-\frac{(x-q)^2}{2\hbar}}e^{i\frac{p x}\hbar}}{(\pi\hbar)^\frac14}.$$

Following Bargamnn's philosophy, we remark that, for each $z,z'$,
$$
\langle x\vert\psi_z\rangle\langle\psi _{z'}\vert y\rangle
=
e^{-\frac{\bar z^2}{4\hbar}}e^{-\frac{x^2-2\bar zx}{2\hbar}}e^{-\frac{\bar zz}{4\hbar}}e^{-\frac{\bar z'z'}{4\hbar}}e^{-\frac{y^2-2 z'y}{2\hbar}}e^{-\frac{ {z'}^2}{4\hbar}}
$$
Therefore
\begin{eqnarray}
\langle x\vert\psi_z\rangle\langle\psi _z\vert -y\rangle&
=&
e^{-\frac{z\bar z}{\hbar}}
\langle x\vert\psi_z\rangle\langle\psi _z\vert y\rangle
\nonumber\\
\langle -x\vert\psi_z\rangle\langle\psi _z\vert y\rangle
&
=&
e^{-\frac{z\bar z}{\hbar}}
\langle x\vert\psi_z\rangle\langle\psi _z\vert y\rangle
\nonumber
\end{eqnarray}
Let us define $H^l$ by its integral kernel $H^l(x,y)=H(-x,y)$ where $H(x,y)$ is the integral kernel of $H$. Let $H^r$ be defined the same way by $H^r(x,y)=h(x,-y)$.

Obviously
\newcommand{\lr}{{\substack{l\\r}}}
\[
H^\lr=\int h(z)\vert\psi _{\mp z}\rangle\langle\psi _{\pm z}\vert\frac{dzd\bar z}{2\pi\hbar}.
\]
%
%
%
%
%
Therefore, we get the following off-diagonal expressions.
\begin{Lem}\label{offdiag}
\bea
VH&=&\int h(q,p)
\vert\psi_{-z}\rangle\langle\psi_z\vert\frac{dzd\bar z}{2\pi\hbar}\nonumber\\
UH&=&\int h(q,p)
\vert\psi_{z}\rangle\langle\psi_{-z}\vert\frac{dzd\bar z}{2\pi\hbar}\nonumber\\
UVH&=&\int h(q,p)
\vert\psi_{-z}\rangle\langle\psi_{-z}\vert\frac{dzd\bar z}{2\pi\hbar}\nonumber\\
U^2=V^2&=&1\nonumber
\eea
\end{Lem}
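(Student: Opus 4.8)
The plan is to reduce everything to the two Bargmann-type identities established just above the statement, namely
\[
\langle x\vert\psi_z\rangle\langle\psi_z\vert -y\rangle=e^{-\frac{z\bar z}{\hbar}}\langle x\vert\psi_z\rangle\langle\psi_z\vert y\rangle,\qquad
\langle -x\vert\psi_z\rangle\langle\psi_z\vert y\rangle=e^{-\frac{z\bar z}{\hbar}}\langle x\vert\psi_z\rangle\langle\psi_z\vert y\rangle,
\]
together with the observation that $\psi_{-z}(x)=\psi_z(-x)\,e^{?}$ up to the obvious sign bookkeeping in the Gaussian exponent. First I would compute the integral kernel of $VH$ directly: by the definition $VH(x,y;x',y')=H(-x,y;x',y')$ (omitting the primed variables), so $VH(x,y)=\int h(z)\,\langle -x\vert\psi_z\rangle\langle\psi_z\vert y\rangle\,\frac{dzd\bar z}{2\pi\hbar}$. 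The content is then the elementary algebraic fact that $\langle -x\vert\psi_z\rangle=\langle x\vert\psi_{-z}\rangle$ (read off from \eqref{defec} with $d=N=1$: reflecting $x\mapsto -x$ in $e^{-(x-q)^2/2\hbar}e^{ipx/\hbar}$ is the same as $q\mapsto -q$, $p\mapsto -p$, i.e. $z\mapsto -z$), which turns the kernel into $\int h(z)\,\langle x\vert\psi_{-z}\rangle\langle\psi_z\vert y\rangle\,\frac{dzd\bar z}{2\pi\hbar}$, i.e. the first displayed formula. The second formula is the mirror computation: $UH(x,y)=H(x,-y)=\int h(z)\,\langle x\vert\psi_z\rangle\langle\psi_z\vert -y\rangle\,\frac{dzd\bar z}{2\pi\hbar}$ and $\langle\psi_z\vert -y\rangle=\langle\psi_{-z}\vert y\rangle$ by the same reflection, giving $\int h(z)\,\langle x\vert\psi_z\rangle\langle\psi_{-z}\vert y\rangle\,\frac{dzd\bar z}{2\pi\hbar}$.

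For the third identity I would just compose: $UVH(x,y)=(VH)(x,-y)$, so applying the $U$-reflection to the kernel of $VH$ already obtained, $\langle\psi_z\vert -y\rangle\to\langle\psi_{-z}\vert y\rangle$, produces $\int h(z)\,\vert\psi_{-z}\rangle\langle\psi_{-z}\vert\,\frac{dzd\bar z}{2\pi\hbar}$; equivalently one notes that $UVH(x,y)=H(-x,-y)$ and applies both reflections at once. Note that the Gaussian prefactors $e^{-z\bar z/\hbar}$ appearing in the two boxed identities do \emph{not} show up here, because in the off-diagonal representation one keeps the coherent vector $\psi_{\pm z}$ itself rather than re-expanding it on the frame $\{\psi_z\}$; this is precisely the point of passing to the off-diagonal calculus and should be remarked on. Finally $U^2=V^2=1$ is immediate from the original definitions — reflecting $y$ twice, or $x$ twice, is the identity on kernels — so no coherent-state manipulation is needed there; alternatively it follows from iterating the formulas just proved, since $\psi_{-(-z)}=\psi_z$ and $h$ is unchanged.

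The only genuine point requiring care — and the step I would flag as the main (though still minor) obstacle — is keeping the change-of-variables and the sign conventions consistent: the excerpt writes the symplectic rescaling $x=(x_1-x_2)/\sqrt2$, $y=(y_1-y_2)/\sqrt2$ with an evident typo ($y'$ should be $(y_1+y_2)/\sqrt2$), and also writes $H^r(x,y)=h(x,-y)$ where $H(x,y)$ was presumably meant; one must make sure that after the rescaling the operators $U,V$ really act as $x\mapsto -x$ resp. $y\mapsto -y$ on the reduced kernel, which is where the $1/\sqrt2$ normalization of the coherent states is used so that $\psi_z$ in the new $x$-variable is again a unit coherent state. Once the dictionary $\langle -x\vert\psi_z\rangle=\langle x\vert\psi_{-z}\rangle$, $\langle\psi_z\vert -y\rangle=\langle\psi_{-z}\vert y\rangle$ is pinned down, the proof is a three-line substitution in the defining integral \eqref{deftop}, and the weak-sense interpretation of that integral is inherited verbatim from the original definition of $H$.
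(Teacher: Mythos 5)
Your proof is correct and is essentially the paper's own argument made explicit: the paper disposes of the lemma by the one-line observation that $H^{l/r}=\int h(z)\,\vert\psi_{\mp z}\rangle\langle\psi_{\pm z}\vert\,\frac{dz\,d\bar z}{2\pi\hbar}$, which is exactly your reflection identity $\psi_z(-x)=\psi_{-z}(x)$ (no extra phase in the convention \eqref{defec}) applied to the kernel of $H$, with $V$ acting on the $x$-slot and $U$ on the $y$-slot. You are also right that the Gaussian factor $e^{-z\bar z/\hbar}$ plays no role in this off-diagonal statement and that the two typos you flag ($y'=(y_1+y_2)/\sqrt2$ and $H^r(x,y)=H(x,-y)$) are indeed typos.
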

These expressions have to be compared to the following ones, derived form Section \ref{top}.
\begin{Lem}
\bea
VH&=&\int h(ip,-iq)e^{-\frac{q^2+p^2}{2\hbar}}
\vert\psi_{z}\rangle\langle\psi_z\vert\frac{dzd\bar z}{2\pi\hbar}\nonumber\\
UH&=&\int h(-ip,iq)e^{-\frac{q^2+p^2}{2\hbar}}
\vert\psi_{z}\rangle\langle\psi_{z}\vert\frac{dzd\bar z}{2\pi\hbar}\nonumber
\\
UVH&=&\int h(-q,-p)
\vert\psi_{z}\rangle\langle\psi_{z}\vert\frac{dzd\bar z}{2\pi\hbar}\nonumber
\eea
\end{Lem}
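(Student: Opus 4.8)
The plan is to derive the second Lemma directly from the first (Lemma~\ref{offdiag}) by expressing the off-diagonal rank-one operators $\vert\psi_{\pm z}\rangle\langle\psi_{\pm z}\vert$ back in terms of the diagonal ones $\vert\psi_{z}\rangle\langle\psi_{z}\vert$ up to a change of the integration variable and a Gaussian prefactor. Concretely, I would start from the Bargmann-type factorization of the kernel $\langle x\vert\psi_z\rangle\langle\psi_{z'}\vert y\rangle$ already displayed in the text, which is holomorphic in $\bar z$ and in $z'$ separately. This holomorphy is the essential structural fact: it lets one substitute $\bar z\mapsto$ (something) and $z'\mapsto$ (something else) independently, so that reflecting one of the spatial arguments of the kernel can be absorbed into an \emph{affine substitution} in the coherent-state labels rather than genuinely changing the operator.

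First I would treat $VH$. By Lemma~\ref{offdiag}, $VH=\int h(q,p)\,\vert\psi_{-z}\rangle\langle\psi_z\vert\,\frac{dzd\bar z}{2\pi\hbar}$. Using the factorized kernel with $z\mapsto -z$ in the bra-ket $\langle x\vert\psi_{-z}\rangle$ and leaving $\langle\psi_z\vert y\rangle$ untouched, one sees that $\langle x\vert\psi_{-z}\rangle\langle\psi_z\vert y\rangle$ equals $\langle x\vert\psi_{w}\rangle\langle\psi_{w}\vert y\rangle$ for a new label $w$ that depends affinely on $z,\bar z$ — inspection of the exponents (the terms $-\bar z^2/4\hbar$, $+\bar z x/\hbar$, $-\bar z z/2\hbar$, and the symmetric block in $z$) shows that the holomorphic datum to match is $\bar w = \bar z$ on the $x$-side but $w=-z$ on the $y$-side, which forces the replacement to be of the complexified rotation type $(q,p)\mapsto(ip,-iq)$ together with the Gaussian defect $e^{-(q^2+p^2)/2\hbar}$ coming from the mismatch $e^{-z\bar z/\hbar}$ recorded in the displayed identity $\langle -x\vert\psi_z\rangle\langle\psi_z\vert y\rangle=e^{-z\bar z/\hbar}\langle x\vert\psi_z\rangle\langle\psi_z\vert y\rangle$. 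Pushing this substitution through the integral, and relabeling the dummy variable back to $z$, yields $VH=\int h(ip,-iq)\,e^{-(q^2+p^2)/2\hbar}\,\vert\psi_z\rangle\langle\psi_z\vert\,\frac{dzd\bar z}{2\pi\hbar}$. The case of $UH$ is identical with $l\leftrightarrow r$, i.e.\ reflecting $y$ instead of $x$, producing the conjugate rotation $(q,p)\mapsto(-ip,iq)$ with the same Gaussian weight; this is exactly the content already recorded for the T\"oplitz symbol in Section~\ref{top} (the Lemma there, applied with the identification $\utilde W[\rho]=h$). Finally $UVH$ reflects both $x$ and $y$: the two Gaussian defects combine and the two substitutions compose to the genuine point reflection $(q,p)\mapsto(-q,-p)$ with total weight $e^{-z\bar z/\hbar}\cdot e^{-z\bar z/\hbar}$ — but here the substitution $z\mapsto -z$ is a true change of variables of the complex plane with unit Jacobian and maps $\psi_{-z}$ to $\psi_z$, so the Gaussian factors are reabsorbed and one is left simply with $UVH=\int h(-q,-p)\,\vert\psi_z\rangle\langle\psi_z\vert\,\frac{dzd\bar z}{2\pi\hbar}$, consistent with $UV=VU$ being the ordinary permutation.

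The main obstacle I expect is bookkeeping rather than conceptual: one must be careful that the "substitution $\bar z\mapsto\cdot$ in the $x$-factor only" is legitimate. It is legitimate precisely because the integrand is, after multiplying by the Gaussian $e^{-|z|^2/\hbar}$ hidden in $\frac{dzd\bar z}{2\pi\hbar}$-against-$h$, a Schwartz-class function to which the Segal--Bargmann holomorphy and contour-shift arguments apply; the cleanest way to present it is to verify the two claimed identities \emph{at the level of integral kernels} $(x,y)\mapsto (VH)(x,y)$, i.e.\ check that both sides of the asserted formula for $VH$ produce the same function of $(x,y)$ by plugging in the explicit Gaussian kernel and completing the square — this reduces everything to a finite Gaussian computation with no convergence subtlety, since for fixed $x,y$ the $z$-integral is absolutely convergent. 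I would therefore organize the proof as: (i) recall the factorized kernel and the two reflection identities from the preceding display; (ii) for each of $VH$, $UH$, $UVH$, write the kernel as a Gaussian integral in $z$, complete the square, and read off the answer; (iii) note the first two exhibit the complex anticanonical rotation $\begin{pmatrix}0&i\\-i&0\end{pmatrix}$ acting on $h$ with a Gaussian weight, while the third is the plain antipodal map, matching Lemma~\ref{offdiag} and Section~\ref{top}. The anticanonical (symplectic-form-reversing) nature of $(q,p)\mapsto(\pm ip,\mp iq)$ is then just the observation that $dq\wedge dp\mapsto -dq\wedge dp$ under this map, which I would state as the concluding remark tying the computation to the claims in the Introduction.
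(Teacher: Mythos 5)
Your overall strategy --- start from Lemma \ref{offdiag}, use the Bargmann factorization of $\langle x\vert\psi_z\rangle\langle\psi_{z'}\vert y\rangle$ to trade the off-diagonal dyad $\vert\psi_{\mp z}\rangle\langle\psi_{\pm z}\vert$ for the analytic continuation of the diagonal dyad at a complexified label, and read off the complex rotation $(q,p)\mapsto(\mp ip,\pm iq)$ together with a Gaussian defect --- is exactly the computation the paper leaves implicit (the text merely says these formulas are ``derived from Section \ref{top}'', whose own lemma is itself stated without proof), so as a route it is both legitimate and essentially the intended one. Your suggestion to verify everything at the level of integral kernels by completing Gaussian squares is also the right way to make the contour shift honest, modulo the caveat that $h(ip,-iq)$ must be given a meaning for symbols $h$ that do not extend holomorphically; that caveat afflicts the statement itself, not only your proof.

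The genuine gap sits in the one place where the bookkeeping \emph{is} the content of the lemma: the Gaussian exponent. The defect you extract from the displayed identity is $e^{-z\bar z/\hbar}=e^{-(q^2+p^2)/\hbar}$, and indeed a direct computation with the factorized kernel shows that $\vert\psi_{-z}\rangle\langle\psi_z\vert$ equals $e^{-|z|^2/\hbar}$ times the continuation of $\vert\psi_w\rangle\langle\psi_w\vert$ to $\bar w=-\bar z$, $w=z$; this is also what the first line of the Section \ref{top} lemma gives after passing to relative coordinates, since $|z_i-z_j|^2=2(q_-^2+p_-^2)$. Yet you assert the conclusion with $e^{-(q^2+p^2)/2\hbar}$, matching the statement, without ever reconciling the factor $2$. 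You need either to exhibit where the missing $1/2$ comes from or to flag the discrepancy; as written, the argument does not establish the stated constant. Two secondary points: for $VH=\int h\,\vert\psi_{-z}\rangle\langle\psi_z\vert$ the reflected label sits in the ket, so the matching is $\bar w=-\bar z$, $w=z$ (you wrote the transposed assignment, which corresponds to $UH$); and your mechanism for $UVH$ is off --- the clean argument is either the trivial real change of variables $z\mapsto-z$ in $\int h(z)\vert\psi_{-z}\rangle\langle\psi_{-z}\vert$, in which no Gaussian ever appears, or the observation that composing the two complexified rotations sends $q^2+p^2$ to $-(q^2+p^2)$ so that the two weights cancel identically; ``reabsorbing'' the product of two damping factors by a unit-Jacobian substitution is not a mechanism that works.
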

The T\"oplitz symbol of $VH$ (resp. $UH$) is $h_V(q,p)=h(ip,-iq)e^{\frac{q^2+p^2}{2\hbar}}$ (resp.  $h_U(q,p)=h(-ip,iq)e^{\frac{q^2+p^2}{2\hbar}}$).
\begin{Lem}
\noindent Let $h\geq 0, \int h=1$. 

Then $H^B:=\tfrac14(H+VH+UH+UVH)$ is a bosonic state,

\  and $H^F:=\tfrac14(H-VH-UH+UVH)$ is a fernionic one.
\end{Lem}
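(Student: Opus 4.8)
The plan is to identify $H^{B}$ and $H^{F}$ as the compressions of $H$ by the orthogonal projectors onto the symmetric and antisymmetric subspaces of $\cH\otimes\cH$, and then to read off all three requirements (positivity, correct statistics, normalisation) from that identification. First I would record the operator form of the exchange maps: let $T$ be the particle transposition on $\cH^{\otimes2}$, $(Tf)(x_{1},x_{2})=f(x_{2},x_{1})$, which is self-adjoint with $T^{2}=\Id$. Since $U$ acts on the integral kernel of $\rho$ by permuting the $y$-variables and $V$ by permuting the $x$-variables, one has, with no further work, $U\rho=\rho\,T$ and $V\rho=T\,\rho$, hence $UV\rho=T\rho T$ (in the reduced variables of this section $T$ is simply the parity $x\mapsto-x$ in the relative coordinate and $\Pi\psi_{z}=\psi_{-z}$, which is how these formulae are consistent with Lemma \ref{offdiag}). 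Writing $P_{\pm}:=\tfrac12(\Id\pm T)$ for the projectors onto $\mathrm{Sym}^{2}\cH$ and $\wedge^{2}\cH$, expanding the four-term averages gives immediately
\[
H^{B}=\tfrac14(\Id+T)\,H\,(\Id+T)=P_{+}HP_{+},\qquad H^{F}=\tfrac14(\Id-T)\,H\,(\Id-T)=P_{-}HP_{-}.
\]

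Then I would check the three points. Positivity and self-adjointness: by \eqref{elemproptop}, $h\ge0$ forces $H\ge0$ (and $H=H^{*}$), so $\langle\phi,P_{\pm}HP_{\pm}\phi\rangle=\langle P_{\pm}\phi,HP_{\pm}\phi\rangle\ge0$ and $(P_{\pm}HP_{\pm})^{*}=P_{\pm}HP_{\pm}$; trace-class-ness passes from $H$ to $P_{\pm}HP_{\pm}$ since $P_{\pm}$ are bounded. Statistics: from $TP_{+}=P_{+}$ one gets $VH^{B}=TP_{+}HP_{+}=H^{B}$ and $UH^{B}=P_{+}HP_{+}T=H^{B}$, so $H^{B}$ is a $(+1)$-eigenvector of both $U$ and $V$, i.e.\ bosonic; from $TP_{-}=-P_{-}$ one gets $VH^{F}=-H^{F}$ and $UH^{F}=-H^{F}$, i.e.\ fermionic. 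Normalisation: $\Tr H^{B}+\Tr H^{F}=\Tr\!\big((P_{+}+P_{-})H\big)=\Tr H=1$ by \eqref{elemproptop} and $\int h=1$, so $H^{B},H^{F}\ge0$ are (possibly sub-normalised) states whose traces split the unit trace of $H$ between the two sectors; one rescales if a unit-trace representative in a given sector is wanted.

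The same conclusion can be reached in the off-diagonal T\"oplitz language of this section, which is probably the form to display: substituting Lemma \ref{offdiag} into the definitions, the four kernels recombine into
\[
H^{B}=\tfrac14\int h(z)\,\big(|\psi_{z}\rangle+|\psi_{-z}\rangle\big)\big(\langle\psi_{z}|+\langle\psi_{-z}|\big)\tfrac{dzd\bar z}{2\pi\hbar},\qquad H^{F}=\tfrac14\int h(z)\,\big(|\psi_{z}\rangle-|\psi_{-z}\rangle\big)\big(\langle\psi_{z}|-\langle\psi_{-z}|\big)\tfrac{dzd\bar z}{2\pi\hbar},
\]
which are manifestly nonnegative because $h\ge0$ and each integrand is a rank-one positive operator. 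Since $\psi_{-z}(x)=\psi_{z}(-x)$, the vectors $\psi_{z}\pm\psi_{-z}$ are even, resp.\ odd, under $x\mapsto-x$, i.e.\ $\pm1$-eigenvectors of $T$, so $|\psi_{z}+\psi_{-z}\rangle\langle\psi_{z}+\psi_{-z}|$ is fixed by $T$ on both sides (bosonic) and $|\psi_{z}-\psi_{-z}\rangle\langle\psi_{z}-\psi_{-z}|$ changes sign on each side (fermionic); integrating against $h\ge0$ preserves this. The whole argument is bookkeeping; the only thing that actually needs care is keeping the ``sides'' straight — that $U$ is right multiplication by $T$ and $V$ left multiplication, so the average really factors as $(\Id\pm T)H(\Id\pm T)$ — together with the conventional reading of ``state'' as a nonnegative trace-class operator, the total trace one of $H$ being distributed over the bosonic and fermionic sectors rather than reproduced in each.
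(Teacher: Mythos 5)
Your proof is correct, and it reaches the same key display as the paper (the rank-one decompositions $H^{B,F}=\tfrac14\int h\,|\psi_z\pm\psi_{-z}\rangle\langle\psi_z\pm\psi_{-z}|\,\frac{dzd\bar z}{2\pi\hbar}$, which is exactly what the paper's proof exhibits for positivity), but you package the argument differently and, on one point, more carefully. The paper verifies the three properties separately: the $\pm1$-eigenvector identities $H^B=UH^B=VH^B=UVH^B$ and $H^F=-UH^F=\dots$, a trace claim, and positivity from the explicit integral. Your route through $U\rho=\rho T$, $V\rho=T\rho$ and the compression identity $H^{B}=P_{+}HP_{+}$, $H^{F}=P_{-}HP_{-}$ gets the eigenvector property and positivity in one stroke from $TP_{\pm}=\pm P_{\pm}$ and $H\ge0$, and makes transparent that $H^{B,F}$ are genuinely supported on the symmetric/antisymmetric sectors; this is a cleaner and more general argument (it needs nothing about coherent states). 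The one substantive divergence is the normalisation: the paper asserts $\Tr H^{B}=1$ (and implicitly $\Tr H^{F}=1$), but since $\Tr(UH)=\Tr(HT)=\int h(z)e^{-|z|^{2}/\hbar}\frac{dzd\bar z}{2\pi\hbar}\neq 0$ in general, one actually has $\Tr H^{B}+\Tr H^{F}=\Tr H=1$ with each trace close to $\tfrac12$ up to exponentially small corrections; your version, which records the splitting of the trace and notes that a rescaling is needed for a unit-trace representative, is the accurate statement (and is consistent with the paper's closing remark that $H^{B}$ is only ``semiclassical'').
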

\begin{proof}
One has $H^B=VH^B=UH^B=UVH^B$, $\mbox{Tr }H^B=1$,
 $H^F=-VH^B=-UH^B=UVH^B$, $\mbox{Tr }H^B=1$, and 
\[
H^B=\tfrac14\int h(q,p)
\vert\psi_{z}+\psi_{-z}\rangle\langle\psi_z+\psi_{-z}\vert\frac{dzd\bar z}{2\pi\hbar}\geq 0.
\]
\[
H^F=\tfrac14\int h(q,p)
\vert\psi_{z}-\psi_{-z}\rangle\langle\psi_z-\psi_{-z}\vert\frac{dzd\bar z}{2\pi\hbar}\geq 0.
\]
\end{proof}
Finally, $H^B$ is ``semiclassical".

\section{Link with the complex (anti)metaplectic representation}\label{linkmeta}
\newcommand{\ccomp}[1]{\mathcal C(#1)}
We have seen in the previous (sub)sections that $U$ (resp. $V$) is associated to the action of the matrix $\begin{pmatrix}
0&-i\\i&0
\end{pmatrix}$ (resp. $\begin{pmatrix}
0&i\\-i&0
\end{pmatrix}$) on the Husimi function and the Toeplitz symbol. Therefore it is natural to think that $U$ (resp. $V$) should be associated to the ``metaplectic" quantization of $\begin{pmatrix}
0&-i\\i&0
\end{pmatrix}^{-1}=\begin{pmatrix}
0&i\\-i&0
\end{pmatrix}$ (resp. $\begin{pmatrix}
0&i\\-i&0
\end{pmatrix}^{-1}=\begin{pmatrix}
0&-i\\i&0
\end{pmatrix}$),  ``metaplectic" because these matrices are not canonical. Precisely, a definition of quantization of anticanonical mappings has been provide in the preceding section that we can use in the present situation.

With the 
%
%
%
%
%
%
%
definition of $\ccomp{S}$  in  \cite
[Definition 10, Section 7]
{tp3},
we get  our final result, as a direct application of \cite
[formula (7.1)]
{tp3},.
\begin{Lem}
Let $H$ a Toeplitz operator of symbol $h(q,p)$. Then
\bea
UH&=&
\ccomp{\scriptsize\begin{pmatrix}
0&i\\-i&0
\end{pmatrix}}H.\nonumber\\
VH&=&
\ccomp{\scriptsize\begin{pmatrix}
0&-i\\i&0
\end{pmatrix}}H
\nonumber
\eea

\end{Lem}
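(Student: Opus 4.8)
The plan is to read off $UH$ and $VH$ from their off-diagonal coherent-state representations in Lemma~\ref{offdiag} and to match these, essentially on the nose, with the definition of the complex anticanonical quantization $\ccomp{\cdot}$ recalled from \cite{tp3}. First I would unwind \cite[Definition~10]{tp3} together with \cite[formula~(7.1)]{tp3} in the present single-block situation ($d=1$, $N=2$, one $ij$-pair, all other variables frozen): for an anticanonical linear map $S$ — a complex-linear map of the $(q,p)$-plane which, passing to $z=q+ip$, exchanges up to coefficients the holomorphic and antiholomorphic directions — the operator $\ccomp{S}H$ attached to a T\"oplitz operator $H=\tfrac1{2\pi\hbar}\int h(q,p)\,|\psi_z\rangle\langle\psi_z|\,dzd\bar z$ is, by that formula, the off-diagonal T\"oplitz operator whose ket label is the $S$-image of the holomorphic variable $z$ and whose bra label is the point whose conjugate is the $S$-image of $\bar z$, with the normalisation prescribed there.

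Next I would compute the action of the two matrices on coherent-state labels. Writing $z=q+ip$, the matrix $\begin{pmatrix}0&i\\-i&0\end{pmatrix}$ sends $(q,p)$ to $(ip,-iq)$, hence $z\mapsto z$ and $\bar z\mapsto-\bar z$; since $\overline{-z}=-\bar z$, the ket stays at $\psi_z$ while the bra moves to $\psi_{-z}$. Symmetrically $\begin{pmatrix}0&-i\\i&0\end{pmatrix}$ sends $(q,p)$ to $(-ip,iq)$, i.e. $z\mapsto-z$ and $\bar z\mapsto\bar z$, so the ket moves to $\psi_{-z}$ and the bra stays at $\psi_z$. Inserting these into \cite[(7.1)]{tp3} yields
\[
\ccomp{\begin{pmatrix}0&i\\-i&0\end{pmatrix}}H=\frac1{2\pi\hbar}\int h(q,p)\,|\psi_{z}\rangle\langle\psi_{-z}|\,dzd\bar z,
\]
\[
\ccomp{\begin{pmatrix}0&-i\\i&0\end{pmatrix}}H=\frac1{2\pi\hbar}\int h(q,p)\,|\psi_{-z}\rangle\langle\psi_{z}|\,dzd\bar z,
\]
and these are exactly the expressions for $UH$ and $VH$ furnished by Lemma~\ref{offdiag}. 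That closes the argument.

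The step I expect to be the main obstacle is the faithful matching of conventions with \cite{tp3}: one must check that $\ccomp{S}$ is built from $S$ rather than from $S^{-1}$ (harmless here since both matrices are involutions, $S^2=I$ — consistent with $U^2=V^2=1$ in Lemma~\ref{offdiag} — but it pins down the orientation of the statement), that the holomorphic/antiholomorphic bookkeeping for $\psi_z$ versus $\langle\psi_z|$ agrees with the Bargmann convention used in \cite{tp3}, and that the Gaussian weights $e^{-(q^2+p^2)/2\hbar}$ which surface in the \emph{diagonal} T\"oplitz representations of Section~\ref{top} are precisely those absorbed once \cite[(7.1)]{tp3} is written in the off-diagonal form; the consistency of the two lemmas of Section~\ref{top} already encodes this, so invoking Lemma~\ref{offdiag} directly sidesteps any contour-rotation computation. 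One should also record explicitly that each $S$ here is genuinely anticanonical — it sends the symplectic form $dq\wedge dp$ to its opposite, and is not an element of $Sp(2,\bR)$ since it has complex entries — so that \cite[Definition~10]{tp3}, rather than the ordinary (complex) metaplectic representation, is the pertinent notion; this is exactly why one cannot replace these $\pm i$ rotations by honest symplectic representatives, and it is the whole point of passing to the off-diagonal calculus of \cite{tp3}.
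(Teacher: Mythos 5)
Your proof is correct and follows essentially the same route as the paper, which gives no computation of its own but asserts the lemma as a direct application of \cite[Definition 10 and formula (7.1)]{tp3}; you carry out that application explicitly by computing that $\begin{pmatrix}0&i\\-i&0\end{pmatrix}$ fixes $z$ and negates $\bar z$ (and symmetrically for the other matrix) and then matching the resulting ket/bra labels against the off-diagonal representations of Lemma~\ref{offdiag}. The only residual exposure --- that the convention for $\mathcal{C}(S)$ in \cite{tp3} is the one you reconstruct, and that the $S$-versus-$S^{-1}$ ambiguity is immaterial because both matrices square to the identity --- is precisely the caveat you already record.
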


But the ``true" result is the following, that we express only  for $U$, the case $V$ being straightforwardly the same).
\begin{Prop}\label{propnoncan}
$$
UH=T^{off}\left[\sigma^{off}[\ccomp{\scriptsize\begin{pmatrix}
0&i\\-i&0
\end{pmatrix}}H]
\right],
$$
where $\sigma^{off}[\ccomp{\scriptsize\begin{pmatrix}
0&i\\-i&0
\end{pmatrix}}H]$ is defined by \cite
[Section 6.2]
{tp3}, and  $T^{off}$ by the off-diagonal Toeplitz quantization formula \cite
[formula (6.1)]
{tp3},.

Namely,  $UH$ is given by the off-diagonal Toeplitz quantization of the off-diagonal Toeplitz symbol of $\ccomp{\scriptsize\begin{pmatrix}
0&i\\-i&0
\end{pmatrix}}H$  {\bf without} the multiplication
 by the factor $e^{-\frac{|\cdot|^2}{\hbar}}$ 
 as for the Husimi and the (diagonal) Toeplitz cases, as seen in the previous sections.

\end{Prop}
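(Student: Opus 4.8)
The plan is to trace the operator $UH$ through the three representations involved --- the off-diagonal Bargmann/Toeplitz kernel representation of Lemma~\ref{offdiag}, the $\mathcal C(\cdot)$ anticanonical quantization of \cite[Definition 10, Section 7]{tp3}, and the off-diagonal symbol/quantization pair $(\sigma^{off}, T^{off})$ of \cite[Sections 6.1--6.2]{tp3} --- and to check that the two sides agree as integral kernels on $\cH$. First I would recall from Lemma~\ref{offdiag} that $UH=\int h(q,p)\,|\psi_z\rangle\langle\psi_{-z}|\,\frac{dzd\bar z}{2\pi\hbar}$, an \emph{off-diagonal} Toeplitz operator in which the bra and ket are evaluated at $z$ and $-z$ respectively. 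Separately I would unwind $\ccomp{\scriptsize\begin{pmatrix}0&i\\-i&0\end{pmatrix}}H$ using formula (7.1) of \cite{tp3}: since that matrix is anticanonical, its $\mathcal C$-quantization acts on the Toeplitz symbol $h$ by the substitution $h(q,p)\mapsto h(-ip,iq)$ together with the Gaussian prefactor $e^{|z|^2/(2\hbar)}$ (equivalently $e^{-|z|^2/(2\hbar)}$ inside the integral), exactly as recorded in the second Lemma of Section~\ref{offtop}. This gives $\ccomp{\scriptsize\begin{pmatrix}0&i\\-i&0\end{pmatrix}}H$ as an \emph{ordinary} (diagonal) Toeplitz operator with symbol $h_U(q,p)=h(-ip,iq)e^{|z|^2/(2\hbar)}$.

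The heart of the argument is then to show that applying $\sigma^{off}$ (the off-diagonal symbol extraction of \cite[Section 6.2]{tp3}) to this diagonal Toeplitz operator and re-quantizing with $T^{off}$ reproduces precisely the off-diagonal kernel $\int h\,|\psi_z\rangle\langle\psi_{-z}|$, with \emph{no} residual $e^{-|\cdot|^2/\hbar}$ factor. The mechanism I would invoke is the elementary identity used already in Section~\ref{offtop}, namely
\[
\langle x|\psi_z\rangle\langle\psi_z|-y\rangle=e^{-z\bar z/\hbar}\,\langle x|\psi_z\rangle\langle\psi_z|y\rangle,
\]
which shows that passing from a diagonal rank-one kernel $|\psi_z\rangle\langle\psi_z|$ to the off-diagonal one $|\psi_z\rangle\langle\psi_{-z}|$ costs exactly a factor $e^{-|z|^2/\hbar}$. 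Hence the Gaussian blow-up $e^{|z|^2/(2\hbar)}$ built into $h_U$ and coming from the anticanonical $\mathcal C$-quantization is precisely what is consumed when one re-expresses the resulting diagonal Toeplitz operator as an off-diagonal one via $\sigma^{off}$ and $T^{off}$: the prefactors cancel in pairs, leaving the bare symbol $h(q,p)$ against the shifted coherent states $\psi_z,\psi_{-z}$. Concretely I would: (i) write $T^{off}[\sigma^{off}[\cdot]]$ out as its kernel integral using (6.1) of \cite{tp3}; (ii) substitute the diagonal Toeplitz form of $\ccomp{\scriptsize\begin{pmatrix}0&i\\-i&0\end{pmatrix}}H$; (iii) apply the Bargmann identity above to convert $|\psi_z\rangle\langle\psi_z|$ into $|\psi_z\rangle\langle\psi_{-z}|$, absorbing the $e^{-|z|^2/\hbar}$; (iv) observe that the remaining Gaussian weights from $h_U$ and from the definition of $\sigma^{off}$ exactly cancel that factor, leaving $\int h(q,p)|\psi_z\rangle\langle\psi_{-z}|\frac{dzd\bar z}{2\pi\hbar}=UH$ by Lemma~\ref{offdiag}.

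The step I expect to be the main obstacle is the bookkeeping of Gaussian normalizations across the three conventions: the $\mathcal C(\cdot)$-quantization of \cite{tp3} carries its own weight (the $e^{-|\cdot|^2/\hbar}$ appearing in the Husimi and diagonal-Toeplitz statements), the off-diagonal symbol map $\sigma^{off}$ of \cite[Section 6.2]{tp3} is \emph{defined} with a compensating Gaussian so that $T^{off}\circ\sigma^{off}=\Id$ on a suitable class, and the passage from diagonal to off-diagonal coherent-state projectors supplies yet another $e^{-|z|^2/\hbar}$. One must verify that these three factors combine to the trivial factor $1$ --- equivalently, that the net exponent is $-\tfrac{|z|^2}{\hbar}+\tfrac{|z|^2}{2\hbar}+\tfrac{|z|^2}{2\hbar}=0$ --- rather than to some leftover power of $e^{|z|^2/\hbar}$. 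Once this cancellation is confirmed, the Proposition follows, and the assertion in its final sentence --- that $UH$ is the off-diagonal Toeplitz quantization of the off-diagonal symbol of $\ccomp{\scriptsize\begin{pmatrix}0&i\\-i&0\end{pmatrix}}H$ \emph{without} the extra $e^{-|\cdot|^2/\hbar}$ correction present in the Husimi and diagonal-Toeplitz pictures --- is exactly the statement that this weight has been used up in the diagonal-to-off-diagonal conversion. The case of $V$ is identical with $z\leftrightarrow -z$ interchanged on the ket side rather than the bra side, so no separate argument is needed.
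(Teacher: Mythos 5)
There is a genuine gap, and it sits exactly at the step you yourself flagged as the ``main obstacle''. Note first that the paper offers no proof of Proposition~\ref{propnoncan}: it is presented as an immediate consequence of the definitions of $\sigma^{off}$, $T^{off}$ and $\mathcal C(\cdot)$ in \cite{tp3}, combined with Lemma~\ref{offdiag} (which exhibits $UH=\int h\,|\psi_z\rangle\langle\psi_{-z}|\,\tfrac{dzd\bar z}{2\pi\hbar}$ as an off-diagonal Toeplitz operator with \emph{bare} symbol $h$) and the last Lemma of Section~\ref{linkmeta} ($UH=\mathcal C(S)H$ for $S=\left(\begin{smallmatrix}0&i\\-i&0\end{smallmatrix}\right)$), so that $T^{off}\bigl[\sigma^{off}[\mathcal C(S)H]\bigr]=T^{off}\bigl[\sigma^{off}[UH]\bigr]=UH$ once one knows that $T^{off}\circ\sigma^{off}=\operatorname{Id}$ on this class. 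Your route is different: you start from the \emph{diagonal} Toeplitz form of $\mathcal C(S)H$ with symbol $h(-ip,iq)e^{|z|^2/(2\hbar)}$ and try to convert it by hand into the off-diagonal form. The conversion mechanism you invoke is where the argument fails: read as ``passing from $|\psi_z\rangle\langle\psi_z|$ to $|\psi_z\rangle\langle\psi_{-z}|$ costs a factor $e^{-|z|^2/\hbar}$'', the identity is false, because these are two genuinely different rank-one operators (as functions of $y$ their kernels are Gaussians centred at $+q$ and $-q$ respectively) and therefore cannot be scalar multiples of one another. The exact, factor-free identity is $\langle\psi_z|{-y}\rangle=\langle\psi_{-z}|y\rangle$, which is precisely how Lemma~\ref{offdiag} is obtained; the weight $e^{-|z|^2/\hbar}=|\langle\psi_z|\psi_{-z}\rangle|$ only appears when one takes the \emph{diagonal} Husimi or Toeplitz symbol of the off-diagonal projector $|\psi_z\rangle\langle\psi_{-z}|$ --- which is exactly why the diagonal representations of $UH$ carry a Gaussian weight while the off-diagonal one does not.

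As a consequence, your claimed cancellation $-\tfrac{|z|^2}{\hbar}+\tfrac{|z|^2}{2\hbar}+\tfrac{|z|^2}{2\hbar}=0$ is an assertion rather than a computation: one of the three contributions is misattributed (the diagonal-to-off-diagonal ``cost''), and the contribution you assign to the definition of $\sigma^{off}$ is never extracted from \cite[Section 6.2]{tp3}. More generally, steps (i)--(iv) all defer to formulas (6.1) and (7.1) of \cite{tp3} without writing them out, so the proposal remains a programme. The short correct argument is the one sketched above: by Lemma~\ref{offdiag} the operator $UH$ is \emph{already} in off-diagonal Toeplitz form with weight-free symbol $h$; by the preceding Lemma $UH=\mathcal C(S)H$; hence $\sigma^{off}[\mathcal C(S)H]$ is that weight-free symbol and $T^{off}$ applied to it returns $UH$. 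The content of the Proposition is the contrast with the diagonal symbol $h_U(q,p)=h(-ip,iq)e^{(q^2+p^2)/(2\hbar)}$, not a cancellation of three Gaussians.
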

Proposition \ref{propnoncan} shows clearly first that the exchange mappings $U,V$ are clearly associated to complex non-canonical linear transformations, and second that the off-diagonal Toeplitz quantization/representation of $M^\pm(2,\bC):=\{S\in SL(2,\bC),\ \det S=\pm 1\}$ established in \cite[Section 7]{tp3}, is meaningful.

Note again that $\mbox{det }{\small\begin{pmatrix}
0&i\\-i&0
\end{pmatrix}}=\mbox{det }\small\begin{pmatrix}
0&-i\\i&0
\end{pmatrix}=-1$.

\section{A classical phase space with symmetries inherited form quantum statistics}\label{clasphasstat}
%
%
%
The construction of the preceding (sub)section suggests that a noncommutative  extension of the usual phase space of classical mechanics, namely the cotangent bundle of the configuration space, is possible in order to handle the trace, at the classical underlying level, of more  symmetries, coming from the quantum one, than the one usually considered: namely the unitary in a Hilbert space of the quantum propagation leading to the symplectic classical evolution associated to $SL(2,\bR)$. One recovers  the presence of the fundamental (as responsible, e.g., of the stability of matter) spin-statistics symmetries at the classical level by extending the group of symmetry  $SL(2,\bR)$ acting pointwise on the phase-space to $M^\pm(2,\bC)=\{S\in M(2,\bC), \det{S}=\pm 1\}$ with its action on a noncommutative space  established in 
\cite{tp3}.
\vfill

\newpage

\end{document}